
\documentclass[11pt]{amsart}

\usepackage{color}
\usepackage{epsfig}
\usepackage{psfrag, graphics}
\usepackage{amsthm}
\usepackage{amssymb}
\usepackage{amsmath}
\usepackage{amscd}
\usepackage{graphicx}
\usepackage{epstopdf}
\usepackage{stmaryrd}

\DeclareGraphicsRule{.tif}{png}{.png}{`convert #1 `basename #1 .tif`.png}


\newcommand{\eqdef}{\;{:=}\;}

\newtheorem {Theorem}   {Theorem}

\numberwithin{Theorem}{section}

\newtheorem {Lemma}[Theorem]    {Lemma}
\newtheorem {Proposition}[Theorem]{Proposition}

\theoremstyle{definition}
\newtheorem{Definition}[Theorem]{Definition}
\theoremstyle{remark}
\newtheorem{Remark}[Theorem]{Remark}
\newtheorem{Example}[Theorem]{Example}

%

\expandafter\chardef\csname pre amssym.def
at\endcsname=\the\catcode`\@ \catcode`\@=11
\def\undefine#1{\let#1\undefined}
\def\newsymbol#1#2#3#4#5{\let\next@\relax
 \ifnum#2=\@ne\let\next@\msafam@\else
 \ifnum#2=\tw@\let\next@\msbfam@\fi\fi
 \mathchardef#1="#3\next@#4#5}
\def\mathhexbox@#1#2#3{\relax
 \ifmmode\mathpalette{}{\m@th\mathchar"#1#2#3}%
 \else\leavevmode\hbox{$\m@th\mathchar"#1#2#3$}\fi}
\def\hexnumber@#1{\ifcase#1 0\or 1\or 2\or 3\or 4\or 5\or 6\or 7\or 8\or
 9\or A\or B\or C\or D\or E\or F\fi}

\font\teneufm=eufm10 \font\seveneufm=eufm7 \font\fiveeufm=eufm5
\newfam\eufmfam
\textfont\eufmfam=\teneufm \scriptfont\eufmfam=\seveneufm
\scriptscriptfont\eufmfam=\fiveeufm

\catcode`\@=\csname pre amssym.def at\endcsname




\newcommand{\AC}{{\mathcal A}}

\newcommand{\EE}{{\mathcal E}}
\newcommand{\FF}{{\mathcal F}}

\newcommand{\HH}{{\mathcal H}}

\newcommand{\MM}{{\mathcal M}}

\newcommand{\PP}{{\mathcal P}}

\newcommand{\SC}{{\mathcal S}}

\newcommand{\WW}{{\mathcal W}}


\def    \C      {{\mathbb C}}
\def    \R      {{\mathbb R}}
\def    \Z      {{\mathbb Z}}

\def    \T      {{\mathbb T}}


\def    \vvv   {|\hspace{-0.1em}|\hspace{-0.1em}|}
\def    \eqdef    {\;{:=}\;}
\def    \id      {\operatorname{id}}

\def    \om       {\omega}
\def    \eps      {\epsilon}

\def    \p        {\partial}
\def    \d     {\delta}

\def    \Crit      {\operatorname{Crit}}

\def    \12      {\frac{1}{2}}
\def    \Vol      {\operatorname{Vol}}
\def    \hl       {H_{\scriptscriptstyle{L}}}
\def    \hlo       {\widetilde{H}_{\scriptscriptstyle{L}}}

\def    \H   {\operatorname{H}}  
\def    \HF   {\operatorname{HF}} 
\def    \CF   {\operatorname{CF}} 
\def    \CM   {\operatorname{CM}} 
\def    \cz   {\operatorname{\mu_{\scriptscriptstyle{CZ}}}} 
 
\def    \masl   {\operatorname{\mu^{\scriptscriptstyle{L}}_{\scriptscriptstyle{Maslov}}}}
\def    \mor   {\operatorname{I_{\scriptscriptstyle{Morse}}}} 


\begin{document} 


\title[Action selectors and Maslov class rigidity]{Action selectors and Maslov class rigidity}

\author[Ely Kerman]{Ely Kerman}
\address{Department of Mathematics, University of Illinois at Urbana-Champaign, Urbana, IL 61801, USA }
\email{ekerman@math.uiuc.edu}

\date{\today}

\subjclass[2000]{53D12 (Primary) 53D40, 37J45 (Secondary)}

\bigskip



\begin{abstract} 
In this paper we detect new restrictions on the Maslov class
of displaceable Lagrangian submanifolds of symplectic manifolds which are symplectically aspherical. 
These restrictions are established using action selectors for Hamiltonian flows. In particular, we construct and utilize a new  action selector for the flows of a special  class of 
Hamiltonian functions which arises naturally in the study of Hamiltonian 
paths which minimize the Hofer length functional.  
\end{abstract} 

\keywords{Lagrangian submanifold; Maslov class; Floer theory, Hofer's geometry} 

\maketitle

\section{INTRODUCTION}

One of the central rigidity phenomena in symplectic topology concerns restrictions on the Maslov class of a
Lagrangian submanifold which can not be detected by topological methods alone. 
This type of rigidity was discovered independently by Viterbo in \cite{vi} and Polterovich in \cite{po:a}, for  Lagrangian submanifolds of symplectic vector spaces
and certain cotangent bundles.  
In this paper we study 
Maslov class rigidity for Lagragian submanifolds of closed symplectic manifolds  which are 
{\it symplectically aspherical}. That is, we assume that  both the symplectic form
and the first Chern class vanish on the second homotopy group. Such symplectic manifolds
hold a special place in symplectic topology,  in particular for results which concern
the contractible periodic orbits of Hamiltonian flows, see for example \cite{cz, gi:con, hi, sz, schw}.  In this setting, Hamiltonian Floer homology
becomes a more precise tool for studying periodic orbits, in part,  because the actions and Conley-Zehnder indices of these orbits are both well-defined.  
We will exploit this precision to establish new restrictions on the Maslov class of displaceable Lagrangian submanifolds, i.e., Lagrangian submanifolds which can be displaced by a 
Hamiltonian diffeomorophism.  
In the process, we will construct  a new action 
selector  for Hamiltonian functions which have properties that are necessary to generate 
Hamiltonian paths that  minimize the (positive) Hofer length, but fail to do so.

Let $L$ be a Lagrangian submanifold of a symplectic manifold 
$(M, \om)$.  Recall that the Maslov class of  $L$  is a homomorphism  $\masl$ from the second relative 
homotopy group $\pi_2(M,L)$ to  $\Z$, and the minimial Maslov
number of $L$ is the largest positive integer $N_L$ such that the  image of $\masl$ is contained in $N_L \Z$. When
$(M, \om)$ is symplectically aspherical, more precisely when the first Chern class of $M$  vanishes 
on $\pi_2(M)$, the  Maslov class descends to a homomorphism
\begin{equation*}
\masl \colon \ker i^* \to \Z, 
\end{equation*}
where $\ker i^*$ is  the kernel of the map $i^* \colon \pi_1(L) \to \pi_1(M)$ induced by the inclusion $i \colon L \hookrightarrow M$. 
In the aspherical setting, the symplectic form also yields a homomorphism $\omega \colon \ker i^* \to \R$ defined by integrating $\omega$ over spanning discs.

As in  \cite{vi},  we will study $\masl$ by considering 
Hamiltonian flows which are supported in normal neighborhoods of $L$, where they can be identified with reparameterizations of 
geodesic flows on $T^*L$. For displaceable Lagrangian submanifolds, these Hamiltonian flows will  eventually fail to minimize the Hofer length functional, 
and this failure will  be shown to imply  the existence of special periodic orbits via an action selector which is defined in terms of Hamiltonian Floer theory. The Conley-Zehnder indices  of these special periodic orbits
will yield  bounds on the Maslov indices  of the corresponding  closed geodesics on $L$, and these bounds will imply the desired rigidity results.

Let $g$ be a Riemannian metric on $L$. The closed geodesics of $g$, with period equal to one, are the critical points of the 
energy functional $\EE_g$ which is defined on the space of smooth loops  $ C^{\infty}(\R/\Z, L)$ by
$$
\EE_g(q(t)) = \int_0^1 \12  |\dot{q}(t)|^2   \, dt.
$$
For a Morse-Bott nondegenerate energy functional $\EE_g$, we will denote by $d_g$ 
the maximum dimension of a critical submanifold of $\EE_g$ which consists of nonconstant 
closed geodesics. The nullspace of the Hessian of $\EE_g$ at a closed geodesic $q(t)$ is spanned by the periodic Jacobi fields along $q(t)$.
If the  metric $g$ has nonpositive sectional, then  these Jacobi fields must be constant  and so $d_g \leq \dim L$. For example,
for  the flat metric on a torus one has $d_g =\dim L$, and if $g$ is a metric with negative sectional curvature then $d_g=1$.

The following is our main result.

\begin{Theorem}
\label{thm1}
Let $L$ be a displaceable Lagrangian submanifold of  a closed symplectic manifold 
$(M, \om)$ which has  dimension $2n$  and is symplectically aspherical.  Suppose that $L$ admits a metric $g$ of nonpositive sectional curvature whose energy functional $\EE_g$ is Morse-Bott. Then there is a  class $A$  in $\ker i^*$ with $\omega(A)>0$ and 
$$\masl(A) \in [n+1-d_g, n+1 \,(+1)],$$
where the $(+1)$ term does not contribute if $L$ is orientable.  In particular, since $d_g \leq n$, we have $N_L \in [1, n+1 \,(+1)]$.
\end{Theorem}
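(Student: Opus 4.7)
The plan is to use the new action selector to detect a non-trivial $1$-periodic orbit of a natural family of Hamiltonians concentrated near $L$ at the precise moment when this family fails to generate a positive Hofer-length-minimizing path, and then to translate the resulting Conley-Zehnder bound into a bound on the Maslov index of the closed geodesic of $g$ to which this orbit corresponds.

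First I would use the Weinstein Lagrangian tubular neighborhood theorem to identify a neighborhood of $L$ in $M$ with a neighborhood of the zero section in $(T^*L, d\lambda)$ endowed with $g$, and introduce the energy function $E(q,p) = \12 |p|_g^2$, whose Hamiltonian flow reparametrizes the geodesic flow. For a parameter $s > 0$ I would pick a profile $f_s\colon [0,\infty) \to \R$ vanishing near $0$ and of compact support, and set $H_s = f_s(E)$, transported to $M$ via the Weinstein identification and extended by zero. For suitable choices of $f_s$, the non-constant $1$-periodic orbits of $H_s$ correspond bijectively to closed geodesics of $g$ whose $g$-energy lies in an interval whose upper endpoint grows to infinity with $s$, while $\int_0^1 \max H_s \, dt$ grows without bound.

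Since $L$ is displaceable with finite displacement energy $e(L)$, a standard energy-capacity argument combined with the spectrality of the new action selector on this class of Hamiltonians implies that the path $\phi^t_{H_s}$ can minimize positive Hofer length only while $\int_0^1 \max H_s \, dt \le e(L)$. Hence this minimizing property must fail past some critical value $s_0$, and applying the new selector to $H_{s_0}$ (or to a nearby member of the family) yields a non-constant $1$-periodic orbit $\gamma_0$ together with a spanning disc $D_0 \subset M$ whose boundary loop in $L$ represents a class $A \in \ker i^*$ with $\omega(A) > 0$, and whose Conley-Zehnder index, interpreted via a Robbin-Salamon index in the Morse-Bott setting, lies in a narrow interval around $n$, namely of the form $[n, n+1\,(+1)]$ with the $(+1)$ contributing only when $L$ is non-orientable.

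Under the Weinstein identification, $\gamma_0$ corresponds to a closed geodesic $\bar\gamma_0$ of $g$ lying on a Morse-Bott critical submanifold of $\EE_g$ of dimension at most $d_g$. The standard Viterbo-type formula for Hamiltonian orbits in this cotangent setup expresses the Conley-Zehnder (or Robbin-Salamon) index of $\gamma_0$, computed via the trivialization of $TM$ over $D_0$, in terms of $\mor(\bar\gamma_0)$, the Morse-Bott nullity of $\bar\gamma_0$, and $\masl(A)$, up to a fixed shift. The nonpositive sectional curvature assumption forces periodic Jacobi fields along $\bar\gamma_0$ to be parallel, so $\mor(\bar\gamma_0) = 0$ and the Morse-Bott nullity is at most $d_g$. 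Combining these facts with the Conley-Zehnder range produced by the selector yields $\masl(A) \in [n+1-d_g, n+1\,(+1)]$, and hence $N_L \in [1, n+1\,(+1)]$ since $d_g \le n$. The hardest step is the construction of the new action selector and the verification that at the critical moment $s_0$ it picks out an orbit whose Conley-Zehnder index lies in this narrow interval; this is more delicate than what the standard Schwarz-Oh spectral invariants provide. Secondary difficulties are the Morse-Bott to integer passage through the Robbin-Salamon index, and the orientation shift that produces the $(+1)$ term in the non-orientable case.
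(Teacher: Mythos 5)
Your overall architecture — Weinstein neighborhood, a radial profile Hamiltonian whose orbits project to closed geodesics, the new selector detecting an orbit when the Hofer-minimizing property breaks, the index relation converting Conley--Zehnder information into Maslov bounds — is the right one. But several of the load-bearing steps are either wrong or skipped, and they are not cosmetic.

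First, the mechanism for ``failure to minimize.'' You take $H_s = f_s(E)$ with $f_s$ of compact support, extended by zero, and expect $\int_0^1 \max H_s\,dt$ to grow and eventually break positivity minimization. That is not how the shortening argument works here. The paper's $K_\nu$ is constant and equal to its \emph{maximum} value $A$ on the complement of the Weinstein neighborhood, not zero; the quantity that grows with $C$ is the \emph{negative} Hofer length $\|K_\nu\|^-$, and Proposition~\ref{shorten} is a negative-Hofer-length statement. The selector $\widehat\sigma$, however, requires a Hamiltonian that is pinned at a nondegenerate maximum and fails to minimize the \emph{positive} Hofer length. The paper reconciles these via a genuine trick you are missing: pass to the time-reversed Hamiltonian $\hlo(t,p) = -\hl(t,\phi^t_{\hl}(p))$. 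This exchanges $\|\cdot\|^+$ and $\|\cdot\|^-$, flips Conley--Zehnder indices and actions, and turns the unique minimum $Q$ of $\hl$ on the zero section into the unique nondegenerate maximum of $\hlo$, so $\hlo$ is pinned at $Q$ and lies in $\HH(Q)$. Without this step the selector simply does not apply to your family.

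Second, what the selector gives you. You say the selected orbit has Conley--Zehnder index in a ``narrow interval around $n$, namely $[n, n+1\,(+1)]$ with the $(+1)$ contributing only when $L$ is non-orientable.'' This conflates two different things. Lemma~\ref{realize2} produces an orbit with Conley--Zehnder index \emph{exactly} $-n-1$ for $\hlo$ (hence exactly $n+1$ for $\hl$ after time reversal). The $(+1)$ ambiguity in the statement of Theorem~\ref{thm1} arises in Proposition~\ref{index}, the Duistermaat--Viterbo--Weber index relation between the Conley--Zehnder index, the Morse index, the Morse--Bott nullity bounded by $d_g$, and the Maslov class — and it is there, not in the selector, that the orientability correction appears.

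Third, you never address which annulus the selected orbit lies in. The profile $\nu$ has a convex region $U_{(\eps,2\eps)}$ and a concave region $U_{(r-2\eps,r-\eps)}$; Proposition~\ref{index} gives the two-sided estimate only for orbits in the convex region. The paper devotes Lemma~\ref{extend} and Proposition~\ref{locate} to showing, via continuity of $\widehat\sigma(\eps,C)$ in both parameters and the lower bound $\widehat\sigma > \vvv\hlo\vvv^+$, that for $C$ large and $\eps$ small the selected orbit must lie in $U_{(\eps,2\eps)}$. Without this, you could land on a concave-side orbit and the Maslov bound would not follow.

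Finally, you assert $\omega(A)>0$ but give no argument. The paper gets $\omega(A)\geq 0$ from the strict inequality $\widehat\sigma(\eps,C) > \vvv\hlo\vvv^+$ together with the explicit action formula for an orbit in the convex region, and then upgrades to strict positivity by Viterbo's Lagrangian perturbation trick. This is a separate step that cannot be waved away.

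None of these gaps is fatal to the \emph{strategy}, which is essentially the paper's, but as written the proof does not close: you have the wrong sign convention for the outer constant and hence no pinned Hamiltonian, a misattribution of where the $(+1)$ comes from, no control on the location of the selected orbit, and no proof of the positivity of $\omega(A)$.
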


The bounds of Theorem \ref{thm1} can often be further refined by using  the fact that 
the Maslov class of an orientable Lagrangian submanifold  takes values in $2\Z$. For example,
in dimension four we have

\begin{Theorem}
\label{surface}
Let $L$ be a displaceable Lagrangian surface of  a closed four-dimensional symplectic manifold 
$(M, \om)$ which is symplectically aspherical. If $L$ is orientable and has genus at least one, then
$N_L=2$.
\end{Theorem}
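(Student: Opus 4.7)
The plan is to combine Theorem \ref{thm1} with the parity constraint on the Maslov class of an orientable Lagrangian. Because $n=2$, the interval produced by Theorem \ref{thm1} is so narrow that orientability leaves only one admissible Maslov value.

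First I would choose an appropriate metric $g$ on $L$. Since $L$ is a closed orientable surface of genus at least one, either $L \cong T^2$, in which case I take $g$ to be a flat metric, or $L$ has genus at least two, in which case I take $g$ to be a hyperbolic metric. Both metrics have nonpositive sectional curvature and Morse-Bott energy functional $\EE_g$. On the flat torus, the nonconstant closed geodesics in each primitive homotopy class form a two-dimensional critical submanifold (the translates of one representative), so $d_g = 2 = \dim L$, in agreement with the example given just before Theorem \ref{thm1}. On a hyperbolic surface, each primitive closed geodesic gives an isolated critical circle of reparametrizations, so $d_g = 1$.

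Next I would apply Theorem \ref{thm1} with $n = 2$, suppressing the $(+1)$ term by orientability. It produces a class $A \in \ker i^*$ with $\omega(A) > 0$ and
$$
\masl(A) \in [\,3 - d_g,\; 3\,],
$$
which is $[1,3]$ in the flat case and $[2,3]$ in the hyperbolic case. Because $L$ is orientable, $\masl$ takes values in $2\Z$, and the only even integer in either interval is $2$. Hence $\masl(A) = 2$, so the minimal Maslov number $N_L$ divides $2$. Orientability also forces $N_L$ to be even, and I conclude $N_L = 2$.

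Once Theorem \ref{thm1} is in hand there is no genuine obstacle: the proof reduces to the standard Riemannian facts that orientable surfaces of positive genus admit nonpositive-curvature metrics with Morse-Bott energy functional, together with the evenness of the Maslov class in the orientable case. The substantive content of the argument is packaged entirely inside Theorem \ref{thm1}.
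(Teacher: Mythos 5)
Your proof is correct and follows exactly the approach the paper indicates in the sentence preceding Theorem \ref{surface}: apply Theorem \ref{thm1} with a flat (torus) or hyperbolic (higher genus) metric, note the resulting interval $[3-d_g,3]$ contains $2$ as its only even integer, and combine with the evenness of the Maslov class for orientable Lagrangians to pin down $N_L=2$. The paper leaves these details to the reader, and you have filled them in faithfully, including the correct values $d_g = 2$ for the flat torus and $d_g = 1$ for hyperbolic metrics.
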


For the important case of Lagrangian tori we obtain the following result.

\begin{Theorem}
\label{torus}
Let $(M, \om)$ be  a closed symplectic manifold 
of dimension $2n$  which is symplectically aspherical.
If $L$ is  a displaceable Lagrangian torus in 
$(M, \om)$, then $N_L \in [2,\, n+1].$
\end{Theorem}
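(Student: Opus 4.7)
The plan is to deduce Theorem \ref{torus} from Theorem \ref{thm1} by equipping the torus with its flat metric and then invoking the parity constraint coming from orientability.

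First I would fix a flat Riemannian metric $g$ on $L \cong T^n$; its sectional curvature is identically zero, and in particular nonpositive. To verify the remaining hypotheses of Theorem \ref{thm1}, identify $L$ with $\R^n/\Z^n$: the nonconstant critical loops of $\EE_g$ of period one are the linear loops $q(t)=q_0 + tv$ with $v \in \Z^n \setminus \{0\}$. For each such $v$, translation in $T^n$ produces a critical submanifold diffeomorphic to $T^n$ of dimension $n$. Since the metric is flat, every periodic Jacobi field along such a geodesic is constant, so the Hessian of $\EE_g$ is nondegenerate transverse to this submanifold. Hence $\EE_g$ is Morse--Bott with $d_g = n = \dim L$, matching the value already flagged in the discussion preceding Theorem \ref{thm1}.

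With the hypotheses verified, apply Theorem \ref{thm1}. Since the torus is orientable, the $(+1)$ correction does not contribute, and we obtain a class $A \in \ker i^*$ with $\omega(A) > 0$ and
$$
\masl(A) \in [\,n+1-d_g,\; n+1\,] \;=\; [\,1,\; n+1\,].
$$
The Maslov class of an orientable Lagrangian takes values in $2\Z$, so $\masl(A)$ is an even integer lying in $[2, n+1]$. By the defining property of $N_L$ we have $N_L \mid \masl(A)$, and in particular $N_L \leq \masl(A) \leq n+1$. Orientability also forces $N_L \geq 2$, and together these bounds give $N_L \in [2, n+1]$, as claimed.

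All the substantive work — constructing the new action selector, relating failure of Hofer-length-minimization to Conley--Zehnder indices, and transferring these to Maslov indices — is already carried out in Theorem \ref{thm1}; the only obstacle in proving Theorem \ref{torus} is the essentially bookkeeping task of checking that the flat torus fits the hypotheses, so no new difficulty arises at this stage.
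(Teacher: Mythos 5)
Your proposal is correct and follows exactly the route the paper intends: the paper states Theorem \ref{torus} immediately after the remark that the bounds of Theorem \ref{thm1} can be refined by the fact that the Maslov class of an orientable Lagrangian takes values in $2\Z$, and gives no separate proof. Your verification that the flat metric on $T^n$ has $d_g = n$ (also noted in the paper's introduction) and the ensuing parity argument is precisely what is implicit in the paper.
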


\noindent This generalizes the main theorem of \cite{vi}, which holds for Lagrangian tori in symplectic vector spaces.
By  Darboux's theorem, Theorem \ref{torus} also implies Viterbo's result. On the other hand, one can easily construct examples of displaceable Lagrangian tori 
which are not contained in any symplectically embedded ball.

\medskip

\noindent{\bf Lagrangian intersections.} Theorem \ref{thm1}  can also be used to prove that certain Lagrangian submanifolds can not be displaced by a Hamiltonian diffeomorphism.  Consider the following generalization 
of Theorem 3.3.3 from Chapter X of \cite{aula}.
Let  $\theta_1, \dots, \theta_n$ be coordinates on the $n$-torus, $\T^n = \R/\Z \times \dots \times \R/\Z$, and let $G$ be the group of transformations on $\T^n$ generated by the 
map 
$$
(\theta_1, \dots, \theta_n) \mapsto (\theta_1+(2n-2)^{-1},\theta_3 \dots, \theta_n, -\theta_2).
$$
Define $\mathbb{K}^n$ to be the quotient $\T^n/G.$

\begin{Theorem}
\label{klein}
Let $(M, \om)$ be  a closed symplectic manifold 
of dimension $2n \geq 6$ which is symplectically aspherical.
If $L$ is  a  Lagrangian submanifold of
$(M, \om)$ which is diffeomorphic to $\mathbb{K}^n$, then $L$ can not be displaced 
by any Hamiltonian diffeomorphism.
\end{Theorem}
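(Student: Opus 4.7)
Proceed by contradiction: assume $L \cong \mathbb{K}^n$ is Hamiltonianly displaceable in $(M,\om)$. The plan is to apply Theorem \ref{thm1} to $L$ equipped with a flat metric and then combine the resulting Maslov-index bound with Theorem \ref{torus} applied to a torus cover of $L$ to derive an impossibility. Equip $L$ with the flat metric $g$ inherited from the standard flat metric on $\T^n$ under the regular $2(n-1)$-to-one covering $\T^n \to \mathbb{K}^n = \T^n/G$; this descent is valid because the generator of $G$ is the composition of a translation and an orthogonal coordinate substitution, hence an isometry. Consequently $g$ has vanishing sectional curvature and $\EE_g$ is Morse--Bott.

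To evaluate $d_g$, observe that a closed geodesic of $g$ lifts to a straight line $t \mapsto p + tv$ in $\R^n$ which closes under some deck transformation whose linear part is a power $A^k$ of the linear part $A$ of the generator of $G$; the closure condition forces $A^k v = v$. Since the restriction of $A$ to its nontrivial $(n-1)$-block has characteristic polynomial $x^{n-1}+1$, whose roots are non-real for $n \geq 3$, the equation $A^k v = v$ with $v \neq 0$ forces $k \equiv 0 \pmod{2(n-1)}$; only deck transformations lying in the translation subgroup $\pi_1(\T^n) \subset \pi_1(\mathbb{K}^n)$ give rise to closed geodesics. The resulting parallel families are $n$-dimensional, so $d_g = n$. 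Because $\det A = (-1)^{n+1}$, the manifold $\mathbb{K}^n$ is orientable precisely when $n$ is odd. Theorem \ref{thm1} thus yields a class $A \in \ker i^*$ with $\om(A) > 0$ and
\[
  \masl(A) \in \bigl[1,\, n+1\,(+1)\bigr],
\]
where the $(+1)$ is present iff $n$ is even.

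To reach a contradiction, pass to the torus cover. The subgroup $\pi_1(\T^n) \subset \pi_1(\mathbb{K}^n)$ of index $2(n-1)$ determines a regular cover $\hat L \to L$ with $\hat L \cong \T^n$. Pulling back along $i\colon L \hookrightarrow M$ produces a symplectically aspherical cover $\hat M \to M$ in which $\hat L$ embeds as a Lagrangian torus. Any displacing Hamiltonian diffeomorphism on $M$ lifts to one on $\hat M$ that continues to displace $\hat L$, since the lifted images of $\hat L$ and its push-forward lie in disjoint preimages. Hence Theorem \ref{torus} applies and gives $N_{\hat L} \in [2, n+1]$. Because the Maslov class pulls back naturally under covers, $N_{\hat L}$ generates the image of $\masl$ restricted to $\pi_1(\T^n) \cap \ker i^*$. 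Using the abelianization $H_1(\mathbb{K}^n;\Z) \cong \Z \oplus \Z/2$ (with $\Z$-generator denoted $a$) and the key relation $a^{2(n-1)} = b_1 \in \pi_1(\T^n)$, the Maslov class factors as $\masl(A) = k\cdot\masl(a)$ for a nonzero integer $k$, and $\masl(a)$ is in turn constrained by $N_{\hat L}$. A divisibility-and-parity case analysis, exploiting the orientation dichotomy driven by the parity of $n$, rules out every integer value of $\masl(A)$ in $[1, n+1(+1)]$, producing the desired contradiction.

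The hard part is the final arithmetic step: reconciling the Maslov-class constraints from Theorem \ref{thm1} applied to $L$ with those from Theorem \ref{torus} applied to its $\T^n$-cover, using the twisted semidirect-product structure of $\pi_1(\mathbb{K}^n)$ and the orientability dichotomy determined by the parity of $n$.
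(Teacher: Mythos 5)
Your proposal diverges from the paper at the crucial step, and the divergence introduces a gap that the paper's argument is specifically designed to avoid. Both arguments begin the same way: apply Theorem~\ref{thm1} with the flat metric on $\mathbb{K}^n$ to conclude $N_L \in [1, n+1\,(+1)]$ (with the $(+1)$ absent when $n$ is odd, since $\mathbb{K}^n$ is orientable precisely for odd $n$). The paper then stays entirely inside $M$: it takes a small Weinstein neighborhood $U$ of $L$, symplectomorphic to a neighborhood of the zero section of $T^*\mathbb{K}^n$ and still displaceable, and invokes the construction of Audin--Lafontaine (\cite{aula}, pp.~296--297) to produce a Lagrangian \emph{torus} $L' \subset U$ with $N_{L'} = (2n-2)N_L$. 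Since $L'$ is displaceable, Theorem~\ref{torus} forces $N_{L'} \leq n+1$, and $(2n-2)N_L \leq n+1$ fails for $n>3$, while for $n=3$ orientability gives $N_L \geq 2$, hence $N_{L'} \geq 6 > 4$. Done.

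Your route instead passes to a covering space $\hat M \to M$ in which the $\T^n$-cover $\hat L$ of $L$ is supposed to embed as a single Lagrangian torus. This is the gap. A finite cover $\hat M \to M$ is classified by a finite-index subgroup of $\pi_1(M)$, and for the preimage of $L$ to be the cover of $L$ corresponding to $\pi_1(\T^n) \subset \pi_1(\mathbb{K}^n)$, you would need a subgroup $\Gamma \leq \pi_1(M)$ with $i_*^{-1}(\Gamma) = \pi_1(\T^n)$. There is no reason for such a $\Gamma$ to exist; in the extreme case where $\pi_1(M)$ is trivial, $M$ has no nontrivial connected covers at all, and the preimage of $L$ in any cover would be a disjoint union of copies of $\mathbb{K}^n$, not a torus. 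Even when some cover exists, the preimage of $L$ may be disconnected, and it may fail to be a torus, and the lifted Hamiltonian may not displace a chosen component from itself. Note also that Theorem~\ref{torus} requires the ambient manifold to be closed, which is fine for finite covers, but the existence problem above is prior to that. Finally, even granting the cover, your concluding ``divisibility-and-parity case analysis'' is asserted rather than carried out; the paper replaces that entire analysis by the single numerical identity $N_{L'} = (2n-2)N_L$, which is exactly what makes the contradiction immediate.

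A minor side issue: your claim that $A^k v = v$ with $v\neq 0$ forces $k \equiv 0 \pmod{2(n-1)}$ because the eigenvalues of $A$ are non-real is not correct for all $n \geq 3$ (for $n=4$, $-1$ is an eigenvalue of the $3\times 3$ block, so $A^2$ already fixes a nonzero real vector). This doesn't affect the application of Theorem~\ref{thm1}, since one only needs $d_g \leq n$, but it does undermine the precise description of the geodesic classes that your later arithmetic would rest on. The upshot is that you should replace the covering-space step with the Audin--Lafontaine nearby-torus construction, which keeps everything inside $M$ and a single Weinstein neighborhood and gives the arithmetic contradiction directly.
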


\begin{proof}
Assume that $L$ is displaceable.
For the flat metric on $\mathbb{K}^n$ we have $d_g = n$ and Theorem \ref{thm1}  implies that 
$N_L$ is in $[1, n+1\, (+1)]$.   Let $U$ be a neighborhood of $L$ in $M$  which is small enough to be  both displaceable and symplectomorphic to a neighborhood
of the zero section in $T^*\mathbb{K}^n$. As described in \cite{aula}, pages 296--297, there is a Lagrangian torus 
$L'$ in  $U$ such that 
\begin{equation}
\label{dilate}
N_{L'} =(2n-2)N_L.
\end{equation} 
The Lagrangian torus $L'$ is also  displaceable and, 
for $n>3$, equation \eqref{dilate} contradicts Theorem \ref{torus} since $N_{L'} = (2n-2)N_L > n+1$.  For $n=3$, $\mathbb{K}^3$ is orientable
and so $N_{L} \geq 2$. In this case, equation \eqref{dilate} implies that  $N_{L'} \geq 6$ which again contradicts Theorem \ref{torus}. Hence, $L$ is not displaceable.
\end{proof}

\medskip

\noindent{\bf Split hyperbolic Lagrangian submanifolds.} 
Following \cite{ks},  we say that a product manifold  $L=P_1 \times \dots \times P_k$ is   {\bf split hyperbolic} if each factor $P_j$ admits a metric with negative sectional 
curvature. By convention, we label the factors of $L$  so that $\dim P_j \leq \dim P_{j+1}$. 
As noted in \cite{ks}, every symplectic manifold with dimension different from two or six, contains Lagrangian submanifolds 
which are both split hyperbolic and displaceable.  For this large class of Lagrangians, Theorem \ref{thm1} yields the following rigidity result.

\begin{Theorem}
\label{splits}
Let $L=P_1 \times \dots \times P_k$ be a split hyperbolic Lagrangian submanifold of  a closed symplectic manifold $(M,\om)$ of dimension $2n$
which is symplectically aspherical.  If $L$ is displaceable, then there is a class $A \in \ker i^*$ such that $\masl(A) \in [\dim P_1, n+1\,(+1)].$
\end{Theorem}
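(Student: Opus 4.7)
The plan is to deduce Theorem \ref{splits} from Theorem \ref{thm1} by equipping $L$ with a well-chosen product metric and computing the corresponding invariant $d_g$.

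First I would choose a metric $g_j$ of negative sectional curvature on each factor $P_j$ (which forces $\dim P_j \geq 2$) and form the product metric $g = g_1 \oplus \cdots \oplus g_k$ on $L$. A short computation shows $g$ has nonpositive sectional curvature: 2-planes tangent to a single factor inherit the negative sectional curvature of $g_j$, while 2-planes that mix distinct factors have sectional curvature zero. Hence $g$ satisfies the curvature hypothesis of Theorem \ref{thm1}.

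Next I would verify the Morse-Bott property and identify the critical submanifolds of $\EE_g$. A period-1 closed geodesic of $g$ is a product $q(t) = (q_1(t), \ldots, q_k(t))$ in which each $q_j$ is either constant or a (possibly multiply-covered) closed geodesic of $g_j$. Because negative curvature forces each nontrivial free homotopy class in $P_j$ to contain a unique closed geodesic up to time translation, the critical submanifold attached to a fixed choice of homotopy data on the nonconstant factors is
$$
C \;\cong\; \prod_{j\colon q_j \text{ constant}} P_j \;\times\; \prod_{j\colon q_j \text{ nonconstant}} S^1.
$$
The Jacobi equation along $q$ decouples across the factors. In a constant component all $\dim P_j$ parallel vectors are periodic Jacobi fields, while in a nonconstant negatively curved component the periodic Jacobi fields are precisely the scalar multiples of $\dot q_j$ (negative curvature rules out any transverse periodic Jacobi field). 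The dimension of the periodic Jacobi field space therefore matches $\dim C$, establishing the Morse-Bott property.

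To finish, I would compute $d_g$. For a nonconstant critical loop, let $J \subseteq \{1, \ldots, k\}$ denote the nonempty set of indices of its nonconstant factors; the associated critical submanifold has dimension
$$
\sum_{j \notin J}\dim P_j \;+\; |J| \;=\; n - \sum_{j \in J}(\dim P_j - 1).
$$
Since each $\dim P_j \geq 2$, every summand is at least $1$, so the sum is minimized by the singleton $J = \{1\}$ (recalling that $P_1$ has smallest dimension by convention), yielding $\dim P_1 - 1$. Thus $d_g = n + 1 - \dim P_1$, and Theorem \ref{thm1} supplies a class $A \in \ker i^*$ with $\omega(A) > 0$ and
$$
\masl(A) \in [\,n + 1 - d_g,\, n + 1\,(+1)\,] \;=\; [\,\dim P_1,\, n+1\,(+1)\,],
$$
which is exactly the desired conclusion. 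The only nontrivial step is the Morse-Bott verification, but it reduces immediately to the classical fact that the periodic Jacobi fields along a closed geodesic in a negatively curved manifold are one-dimensional; once this is in hand the rest is bookkeeping with the product structure.
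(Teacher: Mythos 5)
Your proposal is correct and follows the same route as the paper: equip $L$ with the product of negatively curved metrics, compute $d_g = n + 1 - \dim P_1$, and invoke Theorem \ref{thm1}. The only difference is that you spell out the Morse--Bott verification and the dimension count of the critical submanifolds (via the decoupled Jacobi equation), which the paper simply asserts; this is a welcome addition but not a different argument.
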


\begin{proof}
For each factor $P_j$ fix a metric $g_j$ with negative sectional curvature. Each $\EE_{g_j}$ is Morse-Bott, the nonconstant closed geodesics of $g_j$ are isolated and noncontractible, and there is a unique closed geodesic of $g_j$ in each nontrivial homotopy class of loops in $P_j$.
Let  $g$ be the metric on $L$ which is the sum of the $g_j$. Then, $\EE_g$ is also Morse-Bott, and the nonconstant $1$-periodic geodesics of $g$ occur in Morse-Bott nondegenerate critical submanifolds of dimension no greater than $1+ \dim P_2 + \dots +\dim P_k =1 +n - \dim P_1$. Hence, $d_g =  1+ n - \dim P_1$ and Theorem \ref{thm1} yields the desired class $A \in \ker i^*$.
\end{proof}

In the aspherical setting, Theorem \ref{splits} slightly improves the rigidity results for displaceable, split-hyperbolic Lagrangians from \cite{ks}, which yield a class in $\ker i^*$ 
with Maslov index in the interval  $[\dim P_1 -1, n+1\, (+1)]$.

\subsection{Maslov class rigidity and the fundamental selector}

Under the hypotheses of Theorem \ref{thm1}, one can also use the 
action selector corresponding to the fundamental class in Hamiltonian Floer homology (\cite{schw}),  to detect a class $B \in \ker i^*$ with $\masl(B) \in [n-d_g, n+1\,(+1)].$ This yields no information for the minimal Maslov number of Lagrangian tori as established in Theorem \ref{torus} and applied in Theorem \ref{klein}, since the lower bound for $\masl(B)$ in this case is zero. Moreover, unless one assumes that the Lagrangian submanifold is also monotone, it is not clear whether or not the classes $A$ and $B$ are distinct. Thus, the class $B$ need not yield  information which is complimentary to that obtained from the class $A$.

\begin{Remark}Roughly speaking, the bounds from \cite{ks} correspond to those implied by the class $B$. However, in the more general setting of \cite{ks}, one can not obtain these bounds by using action selectors. This would lead to bounds which only hold modulo the minimal Chern number of the ambient symplectic manifold.  The development of techniques which capture the same information as  the selector constructed here, and which work for more general classes of symplectic manifolds, will be the subject of future work.
\end{Remark}

\subsection{Maslov class rigidity and holomorphic discs}

Starting with  the work of Polterovich from \cite{po:a, po, po1},  many results on the rigidity of the Maslov class have also been obtained 
by considering moduli spaces of holomorphic discs with boundary on the Lagrangian submanifold.  These restrictions  are often as strong or stonger that those obtained here, but 
hold  for the special but important class of monotone displaceable Lagrangian submanifolds. Starting with the work of Oh in \cite{oh}, these results have been obtained primarily through various applications 
and refinements of Lagrangian
Floer homology, \cite{al, bi, bci, bco, bu, fooo}. Since the results presented here make no use of the monotonicity  assumption,  they can be viewed as complimentary to this 
body of work.

Recently, Fukaya has established a beautiful link between the moduli spaces of holomorphic discs with boundary on a Lagrangian submanifold and the 
string topology of the Lagrangian submanifold, \cite{fu}. Among several other applications, he has used this relationship to prove Audin's conjecture which asserts  that every 
Lagrangian torus in a symplectic vector space has minimal Maslov number equal to two. 
As described above, our present methods only allow us to reprove Viterbo's results concerning 
Lagrangian tori in symplectic vector spaces. The coupling of Fukaya's  methods with those developed  in \cite{ks} and here,
will also be the subject of future projects.

\subsection{Organization} In the next section we recall some basic definitions from the study of 
Hofer's geometry and recall a shortening result for certain Hamiltonian
paths. In Section 3, we describe some relevant tools from filtered Hamiltonian Floer theory which are then 
 used to define a new action selector in Section 4. In Section 5 we construct  
special Hamiltonian flows which are  supported near a Lagrangian submanifold.  
Our action selector is then applied to these flows to prove Theorem 
\ref{thm1} in Section 6.

\section{HAMILTONIAN FLOWS AND HOFER'S GEOMETRY}

Henceforth, we will assume that $(M, \om)$ is a closed symplectic manifold of dimension $2n$ which is symplectically  aspherical.
A smooth function $H$ on $S^1 \times M$ will be referred to as a {\bf Hamiltonian} on $M$. 
Here, we identify the circle $S^1$ with $\R / \Z$ and parameterize it with the coordinate $t \in [0,1]$.
Each Hamiltonian $H $ determines a $1$-periodic time-dependent 
Hamiltonian vector field $X_H$ via Hamilton's equation
\begin{equation*}
i_{X_H}\om = -dH_t,
\end{equation*}
where $H_t(\cdot) = H(t, \cdot)$.
The time-$t$ flow of $X_H$ will be denoted by $\phi^t_H$. It is defined for all $t \in [0,1]$ (in fact, for all $t \in \R$).

For  a Hamiltonian $H$ set
$$
H_{av} = \frac{1}{\Vol(M)} \int_0^1 \left( \int_M H_t \, \om^n \right) \, dt,
$$
where $\Vol(M)$ is the volume of $M$ with respect to $\om^n$.
Following \cite{ho1},  the Hofer length of the Hamiltonian path $\phi^t_H$ is defined to be
\begin{eqnarray*}
 \|H\| &=& \int_0^1 \max_M H_t \,\,dt-  \int_0^1 \min_M H_t \,\,dt\\
 {} &=& \left(\int_0^1 \max_M H_t \,dt - H_{av}\right) + \left( -\int_0^1 \min_M H_t \,\,dt + H_{av}\right)\\
{} &=& \|H\|^+ + \|H\|^-.
\end{eqnarray*}
The quantities $\|H\|^+$ and $\|H\|^-$ provide different measures of the length of $\phi^t_H$ 
called the positive and negative Hofer lengths, respectively. We will also consider 
the quantities $$\vvv H \vvv^+=\int_0^1 \max_M H_t \, dt  \,\,\,\,\,\text{  and  }\,\,\,\,\,  \vvv H \vvv^-=-\int_0^1 \min_M H_t \, dt$$ 
which need not be positive.

For a path of Hamiltonian diffeomorphisms $\psi_t$ with $\psi_0 = \id$, let $[\psi_t]$ be the class of Hamiltonian paths which are homotopic to $\psi_t$ relative to its endpoints. Denote the set of Hamiltonians which 
 generate the paths in $[\psi_t]$ by 
$$
C^{\infty}([\psi_t]) =\{ H \in C^{\infty}(S^1 \times M) \mid [\phi^t_H] = [\psi_t]\}.
$$  
The Hofer semi-norm of $[\psi_t]$ is then defined by 
$$
\rho([\psi_t]) = \inf_{H \in C^{\infty}([\psi_t])} \{ \|H\| \}.
$$
The positive and negative Hofer semi-norms of $[\psi_t]$ are defined similarly as
$$
\rho^{\pm}([\psi_t]) = \inf_{H \in C^{\infty}([\psi_t])} \{ \|H\|^{\pm} \}.
$$
Note, that  if $$\|H\|^{(\pm)} > \rho^{(\pm)}([\phi^t_H]),$$ 
then $\phi^t_H$  fails to minimize the (positive/negative) Hofer length in its homotopy class.

The displacement energy of a subset $U \subset (M,\om)$ is the quantity
$$
e(U,M,\om) = \inf_{\psi_t} \{ \rho([\psi_t]) \mid \psi_0=\id  \text{ and }\psi_1(U) \cap \overline{U} = \emptyset \},
$$
where $\overline{U}$ denotes the closure of $U$.
The following result relates the negative Hofer semi-norm and the displacement energy. It is a direct application of Sikorav's curve shortening technique and the reader is referred  to Lemma 4.2 of \cite{ke2} for the entirely similar proof of the analogous result for the positive Hofer semi-norm.

\begin{Proposition}\label{shorten}
Let $H \in C^{\infty}(M)$ be a time-independent  Hamiltonian that is constant and equal to its maximum value on the 
complement of an open set $U \subset M$. If $U$ has finite displacement energy 
and  if $\|H\|^- > 2 e(U) $, then $$\|H\|^- > \rho^-([\phi^t_H])  + \12 \|H\|^+.$$ 
In particular, the Hamiltonian path $\phi^t_H$ does not minimize the negative Hofer length in its homotopy class.
\end{Proposition}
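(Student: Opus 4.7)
The plan is to mirror Sikorav's curve-shortening technique applied in the proof of the positive-norm analogue, Lemma 4.2 of \cite{ke2}, with the roles of maxima and minima interchanged.

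First I would normalize $H$. Since adding a constant to $H$ changes neither the flow $\phi^t_H$ nor the quantities $\|H\|^{\pm}$, after subtracting $\max_M H$ I may assume $H \leq 0$ everywhere and $H \equiv 0$ on $M \setminus U$. Then, given any small $\epsilon > 0$, I would fix a Hamiltonian $F$ with $\phi^1_F(\overline{U}) \cap \overline{U} = \emptyset$ and $\|F\| < e(U) + \epsilon$; write $\psi = \phi^1_F$ for brevity.

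The core of the argument constructs, for each large integer $N$, a Hamiltonian $K_N \in C^{\infty}([\phi^t_H])$ by partitioning $[0,1]$ into $N$ subintervals and conjugating each piece of $\phi^t_H$ by the displacing flow of $F$. The key point is that $H$ and $H \circ \psi^{-1}$ have disjoint supports ($\overline{U}$ and $\psi(\overline{U})$ respectively) and hence commute; this simultaneously ensures that the inserted $F$- and $F^{-1}$-loops are contractible rel endpoints (so that $[\phi^t_{K_N}] = [\phi^t_H]$) and allows one to split the negative values of $H$ across the disjoint regions $U$ and $\psi(U)$. A direct computation then shows that the cost of the $F$-insertions, bounded by $2\|F\| < 2e(U) + 2\epsilon$, appears entirely in $\|K_N\|^+$, while $\|K_N\|^-$ drops below $\|H\|^-$ by roughly $\tfrac{1}{2}\|H\|^+$ plus a vanishing error as $N \to \infty$ and $\epsilon \to 0$. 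The hypothesis $\|H\|^- > 2 e(U)$ enters precisely to guarantee that this shortening is consistent, i.e., that the gain exceeds the cost.

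The main obstacle is the bookkeeping in this last step: one must verify that the positive contributions of the $F$-insertions land entirely in $\|K_N\|^+$ rather than $\|K_N\|^-$, and that the saving in the negative part really is at least $\tfrac{1}{2}\|H\|^+$ in the limit. This is where the disjointness of supports becomes crucial, through identities of the form $\min(\tfrac{1}{2}H + \tfrac{1}{2}H \circ \psi^{-1}) = \tfrac{1}{2}\min H$, which distribute the negative contribution of $H$ across the two disjoint regions without losing volume average. Since the author states that the argument is entirely analogous to the proof of the positive-norm version in \cite{ke2}, I would transcribe those computations with the roles of max and min, and of $\|\cdot\|^+$ and $\|\cdot\|^-$, interchanged.
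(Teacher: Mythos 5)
Your plan --- transcribing the Sikorav curve-shortening argument of Lemma 4.2 of \cite{ke2} with the roles of maxima and minima, and of $\|\cdot\|^+$ and $\|\cdot\|^-$, interchanged --- is precisely what the paper intends, so the approach is sound. It is worth noting that you can avoid redoing the computation. For a time-independent $H$, the Hamiltonian $\widetilde{H}(t,p)=-H(\phi^t_H(p))$ generating $(\phi^t_H)^{-1}$ is just $-H$ by conservation of energy. If $H$ is constant and equal to its maximum outside $U$, then $-H$ is constant and equal to its minimum there, and $\|-H\|^+=\|H\|^->2e(U)$. Applying the positive version from \cite{ke2} to $-H$ gives $\|H\|^->\rho^+\bigl([(\phi^t_H)^{-1}]\bigr)+\tfrac{1}{2}\|H\|^+$. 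The involution $G\mapsto\widetilde{G}$ maps $C^\infty([\phi^t_H])$ bijectively onto $C^\infty([(\phi^t_H)^{-1}])$ with $\|\widetilde{G}\|^\pm=\|G\|^\mp$, so $\rho^+\bigl([(\phi^t_H)^{-1}]\bigr)=\rho^-\bigl([\phi^t_H]\bigr)$, and the Proposition follows.

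One piece of your bookkeeping is off and would mislead you if you carried out the transcription directly. You assert that the cost of the $F$-insertions ``appears entirely in $\|K_N\|^+$.'' If that were true, the hypothesis $\|H\|^->2e(U)$ would be vacuous: the shortened negative norm would be roughly $\tfrac{1}{2}\|H\|^--\tfrac{1}{2}\|H\|^+$, unconditionally below $\|H\|^--\tfrac{1}{2}\|H\|^+$. In fact, writing $\bar F$ for the Hamiltonian of the inverse displacing path, the pair $F,\bar F$ contributes $\vvv F\vvv^-+\vvv\bar F\vvv^-=\vvv F\vvv^-+\vvv F\vvv^+=\|F\|$ to the negative seminorm of the spliced Hamiltonian, while the contribution to the normalization term cancels since $F_{av}+\bar F_{av}=0$. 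The shortened quantity is thus roughly $\tfrac{1}{2}\|H\|^--\tfrac{1}{2}\|H\|^++\|F\|$, and the strict conclusion comes from choosing $\|F\|<e(U)+\epsilon<\tfrac{1}{2}\|H\|^-$, which is exactly where the hypothesis enters. In particular, a single bisection and splice suffices; the $N$-fold partitioning in your sketch is not needed for this statement.
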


\section{FLOER THEORY FOR SYMPLECTICALLY ASPHERICAL MANIFOLDS}

For any Hamiltonian $H$, we will denote the set of contractible $1$-periodic orbits of $\phi^t_H$ by $\PP(H)$. An element $x(t)$ of $\PP(H)$ is said to be nondegenerate 
if the linearized time-$1$ flow $d\phi^1_H \colon T_{x(0)}M \to T_{x(0)}M$ does not have one as an eigenvalue.
If every element of $\PP(H)$ is nondegenerate we will call $H$ a {\bf Floer Hamiltonian}.

The action of $x \in \PP(H)$ is given by
\begin{equation*}
\AC_H(x) = \int_0^1H(t, x(t)) \, dt - \int_{D^2} {\bf x}^* \om,
\end{equation*}
where ${\bf x} \colon D^2 \to M$ is a smooth map from the unit disc in $\C$ such that 
${\bf x}(e^{2\pi i t}) = x(t).$  This is well-defined since $\om|_{\pi_2(M)}$ is trivial. 
The action spectrum of $H$ is the set $$\SC(H) = \{ \AC_H(x) \mid x \in \PP(H)\}.$$

Following \cite{cz}, one can also define
the Conley-Zehnder index $\cz(x)$ of each $x$ in $\PP(H)$. Here we use the original normalization of this index.
In particular, for a local maximum $p$ of an  autonomous Morse Hamiltonian such that the Hessian of $H$ at $p$ 
is arbitrarily small (with respect to a fixed metric on $M$), we have  $\cz(p) = -n$.
 
\subsection{The Floer complex}  Let $H$ be a Floer Hamiltonian on $M$. 
For a generic $S^1$-family of $\om$-compatible almost complex structures $J_t = J$, 
 one can define the Floer complex 
$(\CF^*(H), \d_J)$ as follows. The cochain group $\CF^*(H)$ is the vector space over $\Z_2$  which is generated by the elements of $\PP(H)$ and is  graded
by the Conley-Zehnder index. The action of an element $\alpha = \sum n_j x_j$ of $\CF(H)$ is defined to be 
\begin{equation*}
\AC_H(\alpha) = \max \left\{ \AC_H(x_j) \mid n_j \neq 0 \right\}.
\end{equation*}

For $x$ and $y$ in $\PP(H)$, let $\MM(x,y;H,J)$ be the set of smooth maps  $w \colon \R \times S^1 \to M$
which satisfy  Floer's equation
\begin{equation*}
\p_s w + J(t,w)(\p_t w - X_H(t,w))=0,
\end{equation*} 
and have the following asymptotic behavior with respect to the smooth topology on
$C^{\infty}(S^1, M)$,
\begin{equation*}
 \lim_{s \to -\infty} w(s,t) = x(t)\,\,\,\, \text{  and  }\,\,\,\, \lim_{s \to +\infty} w(s,t) = y(t).
\end{equation*}
For a generic choice of $J$, each $\MM(x,y;H,J)$ is a smooth manifold of dimension
$\cz(y)-\cz(x)$. Moreover, $\R$ acts freely on  $\MM(x,y;H,J)$ by 
$ \zeta \cdot u(s,t) = u(\zeta +s,t)$.
The Floer coboundary map 
$$\d_J \colon \CF^*(H) \to \CF^{*+1}(H)$$ is then 
defined on generators of $\CF(H)$ by 
\begin{equation*}
\d_J(x) = \sum_{\cz(y) = \cz(x)+1} \#_2(\MM(x,y;H,J) / \R) \cdot y,
\end{equation*}
where $\#_2(\MM(x,y;H,J) / \R)$ is the number of elements in the zero dimensional manifold $\MM(x,y;H,J) / \R$, modulo two. 
Floer's gluing and compactness theorems imply that $\d_J \circ \d_J =0$, and we  denote the resulting cohomology $H^*(\CF(H), \d_J)$ by $\HF^*(H)$.

Before discussing the construction of Floer continuation maps and their role in identifying $\HF^*(H)$, we first recall a useful fact concerning  the Floer complexes for Hamiltonians
$H$ and $G$ which generate the same time one flow, i.e., $\phi^1_H = \phi^1_G$, and satisfy $H_{av} =G_{av}$. 
If  the Floer complex $(\CF^*(H), \d_J)$ is well-defined, then for 
\begin{equation}
\label{jtilde}
\widetilde{J} = d(\phi^t_H \circ (\phi^t_G)^{-1}) \circ J \circ d(\phi^t_G \circ (\phi^t_H)^{-1}) 
\end{equation}
 the Floer complex $(\CF^*(G), \d_{\widetilde{J}})$ is also  well-defined and is canonically isomorphic to $(\CF^*(H), \d_J)$.
 In particular, the map $x(t) \mapsto \widetilde{x}(t)=\phi^t_G\left( (\phi^t_H)^{-1} (x(t))\right)$ is a bijection from $\PP(H)$ to $\PP(G)$ and the 
 map $ u(s,t)  \mapsto \phi^t_G\left( (\phi^t_H)^{-1} (u(s,t))\right)$  is a bijection from $\MM(x,y;H,J)$ to  $\MM(\widetilde{x},\widetilde{y};G,\widetilde{J})$.
We will denote this isomorphism of complexes  by 
\begin{equation*}
(\CF^*(G), \d_{\widetilde{J}}) \equiv (\CF^*(H), \d_J).
\end{equation*}
It follows from the work of Seidel in \cite{se}, see also \cite{schw}, that 
it preserves both the action and Conley-Zehnder 
index of periodic orbits.

\subsection{Floer continuation maps} 
\label{identify}

Suppose that the Floer complexes of  two pairs $(G, J_G)$ and $(H,J_H)$  are well-defined. 
We now recall how Floer continuation maps define a natural isomomorphism between their 
respective Floer cohomology groups.
Consider a smooth  homotopy of data
$(F_s, J_s)$  which equals  $(G, J_G)$ for $s \ll 0$ and equals  $(H ,J_H)$ 
for $s \gg 0$.  Let $\MM_s(x,y;F_s, J_s)$ be the space of  maps $u \colon \R \times S^1\to M$ 
which satisfy 
\begin{equation*}
\p_su + J_s(t,u)(\p_t u - X_{F_s}(t,u)) = 0,
\end{equation*}
and have the following asymptotic behavior;
$$\lim_{s \to -\infty} u(s,t) = x(t) \in \PP(G)$$
and 
$$\lim_{s \to +\infty} u(s,t) = y(t) \in \PP(H).$$
For a generic choice of $(F_s, J_s)$  each $\MM_s(x,y; F_s, J_s)$ is a smooth manifold of dimension
$\cz(x)-\cz(y)$.  In this case we will say that the pair  $(F_s, J_s)$ is regular. The continuation map
$$\psi^G_H \colon \CF^*(G) \to \CF^*(H)$$
is then defined on generators by setting
\begin{equation*}
\psi^G_H(x) = \sum_{\cz(y) = \cz(x)} \#_2 \MM_s(x,y;F_s,J_s) \cdot y.
\end{equation*}
This is a chain map and the  induced map in cohomology, which we denote by $\Psi^G_H$,  is independent of the regular homotopy $(F_s, J_s)$.

For any  Floer Hamiltonians
$G$, $H$ and $K$ we have, at the level of  cohomology,
the factorization identity
\begin{equation}
\label{factor}
\Psi^H_G = \Psi^H_F \circ \Psi^F_G.
\end{equation}
This can be used to prove that each continuation map is an isomorphism and hence the Floer cohomology $\HF^*(H) = \H^*(\CF^*(H), \d_J)$ 
is independent of both $H$ and $J$.\footnote{Despite this fact, we use the notation $\HF(H)$ rather than $\HF$, in part to emphasize the role played by the 
elements of $\PP(H)$ in the definition of the Floer cohomology, but more importantly because of the dependence on $H$ of the filtered Floer homology.}

For small autonomous Hamiltonians we have the following normalization of the Floer cohomology.
\begin{Theorem}\cite{fhs,hs}
\label{morse}
There is  an open and dense set of smooth  Morse functions on $M$ such that for any $h$ in this set there is an  $\om$-compatible and $t$-independent almost complex structure $J$, and a positive  integer $m_0$, such that for all $m \geq m_0$ the Floer complex 
$$(\CF^*(h/m), \d_J)$$ is identical to $$(\CM_{n-*}(h/m), \p_{g_J}),$$ the Morse complex of $h/m$ with respect to the metric 
$g_J(\cdot, \cdot) = \om(\cdot, J \cdot)$. 
\end{Theorem}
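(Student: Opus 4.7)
The plan, following Floer--Hofer--Salamon \cite{fhs} and Hofer--Salamon \cite{hs}, is to show that for $m$ sufficiently large the Floer complex of $h/m$ literally degenerates onto the Morse complex of $h/m$. The first step is to identify the generators: I claim that for $m$ large, every contractible $1$-periodic orbit of $\phi^t_{h/m}$ is constant, hence equal to a critical point of $h$. Indeed, since $h$ is autonomous, a non-constant $1$-periodic orbit of $X_{h/m}$ corresponds to a closed orbit of $X_h$ of period $1/m$. Fixing an auxiliary Riemannian metric on $M$, one has $|X_h|\leq C$, so such an orbit has length at most $C/m$; for $m$ large this is below the injectivity radius, forcing the orbit to be constant. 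The nondegeneracy is immediate from the Morse condition on $h$, since the linearized return map at a critical point is $\exp(-J_0\operatorname{Hess}(h)/m)$ in Darboux coordinates and has $1$ as an eigenvalue only in the degenerate case.

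The second step is the index computation: under the normalization fixed in the paper, a critical point $p$ of $h/m$ (with Hessian made arbitrarily small by taking $m$ large) has Conley--Zehnder index $\cz(p)=n-\mor_h(p)$. This matches the shift $\CF^{*}=\CM_{n-*}$ in the statement. Next, for a generic Morse $h$ I would choose $J$ to be $t$-independent and $\om$-compatible so that $g_J(\cdot,\cdot)=\om(\cdot,J\cdot)$ is a Riemannian metric for which $(h,g_J)$ is Morse--Smale; genericity of this condition on the pair is standard.

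The heart of the argument, and the main obstacle, is the identification of Floer trajectories with Morse gradient lines. Given $x,y\in\Crit(h)$, a Floer trajectory $u\in\MM(x,y;h/m,J)$ has energy
\begin{equation*}
E(u)=\AC_{h/m}(x)-\AC_{h/m}(y)=\frac{h(x)-h(y)}{m},
\end{equation*}
which tends to zero as $m\to\infty$. The plan is to invoke the small-energy / $t$-independence theorem of Hofer--Salamon: there exists $\eps_0>0$, depending only on $h$ and $J$, such that any finite-energy solution of Floer's equation with $E(u)<\eps_0$ is $s$-only, i.e.\ $\p_t u\equiv 0$. The input is a uniform $C^0$-bound (compactness of $M$ plus absence of bubbling, which holds by symplectic asphericity), combined with elliptic bootstrapping that promotes smallness of the $L^2$-energy to smallness of $\|\p_t u\|_{C^0}$; one then argues that a $t$-independent Floer solution is precisely a $g_J$-gradient line of $h/m$. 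Since the Morse--Smale condition provides the transversality for $\MM(x,y;h/m,J)/\R$, this identification is on the nose, and compatibility of the signs (here $\Z_2$-counts) is immediate. Combining this with the identification of generators and the index shift yields the asserted equality $(\CF^*(h/m),\d_J)=(\CM_{n-*}(h/m),\p_{g_J})$ for all $m\geq m_0$.
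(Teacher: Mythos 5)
The paper cites this theorem from Floer--Hofer--Salamon and Hofer--Salamon and gives no proof of its own, so there is no in-paper argument to compare against; the following assesses your sketch on its own terms. Your roadmap — constancy of $1$-periodic orbits, the index normalization $\cz(p)=n-\mor(p)$, small energy forcing $t$-independence of Floer cylinders — is the right one, but two of the steps are not quite sound as written.

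The injectivity-radius argument does not establish constancy. Rescaling a $1$-periodic orbit of $X_{h/m}$ does give a $(1/m)$-periodic orbit of $X_h$ of length $O(1/m)$, but a loop shorter than the injectivity radius is merely null-homotopic, not constant (small metric circles are the obvious counterexample). The mechanism that actually works, as in Salamon--Zehnder and Hofer--Salamon, is the $C^2$-smallness of $h/m$: a $1$-periodic orbit must stay in a Darboux ball of radius $O(1/m)$, where $h/m$ is nearly quadratic, and once $\|\operatorname{Hess}(h/m)\|<2\pi$ the linearized flow at the nearby critical point has no nontrivial $1$-periodic orbits; a contraction-mapping argument then rules them out for the nonlinear flow as well. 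Length of the orbit alone does not suffice.

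The heavier gap is in the trajectory identification, which you compress into ``elliptic bootstrapping.'' This is where the real work lives, and it is why the statement cites both references. Two separate things must be proved. First, one needs the Hofer--Salamon result that for $m\geq m_0$ every index-$\leq 1$ solution of the $t$-independent Floer equation connecting constant orbits is itself $t$-independent; the proof is not a pointwise bootstrap of small $L^2$-energy, but a compactness argument (asphericity rules out bubbling, a sequence $m_k\to\infty$ of putatively $t$-dependent solutions Gromov-converges to a broken Morse gradient trajectory) coupled with an implicit-function-theorem uniqueness statement near the space of gradient lines. Second, one needs the Floer--Hofer--Salamon transversality theorem for $t$-independent data: the Morse--Smale condition on $(h,g_J)$ only gives surjectivity of the linearized operator restricted to $t$-independent variations, whereas $D_u$ acts on all $(s,t)$-dependent sections of $u^*TM$, and one must separately control the nonzero Fourier modes in $t$ (which, for $m$ large, become invertible, but this requires an argument). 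Your sketch treats the Morse--Smale regularity as automatically transferring to the Floer problem; making that transfer precise is exactly the content of \cite{fhs}.
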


Since the homology of a Morse complex is isomorphic to the singular homology of $M$, this immediately yields the following identification. 
\begin{Theorem}[\cite{fl1}]
\label{floer}
$$
\HF^*(H) = \H_{n-*}(M ; \Z_2).
$$
\end{Theorem}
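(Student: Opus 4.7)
The plan is to derive Theorem \ref{floer} as a direct consequence of the two facts already assembled in Section 3: (i) continuation maps make $\HF^*(H)$ independent of the Floer Hamiltonian $H$ (and of the almost complex structure $J$), and (ii) Theorem \ref{morse} provides a distinguished Hamiltonian on which the Floer complex degenerates to a Morse complex. So the proof reduces to computing $\HF^*(H)$ for one convenient choice and then invoking invariance.

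First, I would fix a Morse function $h$ lying in the open dense set of Theorem \ref{morse}, together with the associated compatible, $t$-independent almost complex structure $J$, and choose $m \geq m_0$ so that $h/m$ is small enough that Theorem \ref{morse} applies. One should briefly justify that $h/m$ qualifies as a Floer Hamiltonian: since $(M,\om)$ is symplectically aspherical and $h/m$ is autonomous and $C^2$-small, its contractible $1$-periodic orbits are precisely the critical points of $h$, all of which are nondegenerate because $h$ is Morse. Hence $\PP(h/m) = \Crit(h)$ and the nondegeneracy condition is satisfied.

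Next, I would apply Theorem \ref{morse} directly to identify the chain complexes,
\begin{equation*}
(\CF^*(h/m), \d_J) = (\CM_{n-*}(h/m), \p_{g_J}),
\end{equation*}
and pass to cohomology to obtain
\begin{equation*}
\HF^*(h/m) = \H_{n-*}^{\mathrm{Morse}}(h/m; \Z_2).
\end{equation*}
The standard identification of Morse homology with singular homology then gives $\H_{n-*}^{\mathrm{Morse}}(h/m; \Z_2) = \H_{n-*}(M; \Z_2)$.

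Finally, for an arbitrary Floer Hamiltonian $H$ (with any generic $J_H$), the continuation map $\Psi^{h/m}_H$ together with the factorization identity \eqref{factor} shows that $\Psi^{h/m}_H$ is an isomorphism of $\Z_2$-vector spaces, yielding
\begin{equation*}
\HF^*(H) \cong \HF^*(h/m) = \H_{n-*}(M; \Z_2).
\end{equation*}
The only delicate point is the independence statement (i): one must check that each continuation map is invertible, which follows by composing with a continuation map in the reverse direction and using \eqref{factor} together with the fact that the continuation map induced by a constant homotopy is the identity. I expect this invariance, plus the verification that $h/m$ is a Floer Hamiltonian in the aspherical setting, to be the only nonroutine ingredients; everything else is bookkeeping of gradings.
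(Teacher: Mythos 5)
Your proposal is correct and follows essentially the same route the paper takes: invoke Theorem~\ref{morse} for a $C^2$-small Morse function $h/m$ to identify $\CF^*(h/m)$ with the Morse complex, use Morse homology $=$ singular homology, and then transport to a general Floer Hamiltonian $H$ via continuation isomorphisms coming from the factorization identity~\eqref{factor}. The paper compresses this into a one-sentence remark before the theorem statement, so your write-up is simply a more explicit version of the same argument.
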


\subsection{The fundamental class}
\label{fundamental}
One can define a fundamental class in Floer cohomology using Floer continuatuion maps and Theorem \ref{floer}. Let $\widehat{\FF}$ be the space of smooth Morse functions on $M$ which have exactly one local, and hence global, maximum. Choose a function  $h$ in $\widehat{\FF}$. Assuming that  $h$ is sufficiently small in the $C^2$-norm, the set of  orbits in $\PP(h)$ with Conley-Zehnder index $k$ coincides precisely with the 
set of critical points of $h$ with Morse index $n-k$. If $h$ attains its maximum value at 
$p \in M$, then Theorem \ref{floer} implies that the class $[p]$ is  nontrivial and generates the Floer cohomology in degree $-n$.
For any Floer Hamiltonian $H$, we then define the fundamental class in $\HF_{-n}(H)$ to be $[M] = \Psi^{h}_ H([p])$.

As is well known, the class $[M]$ is independent of the choice of the Morse function $h \in \widehat{\FF}$. We include a proof  of this fact as a model for similar future arguments. Let $f$ and $h$ be two 
functions in $\widehat{\FF}$ such that $\PP(f) = \Crit(f)$ and $\PP(h) = \Crit(h)$. Suppose that $f$ and $h$ attain their maximum values at  points $q$ and $p$
in $M$, respectively. Then for every Floer Hamiltonian $H$ and regular $J$ equation \eqref{factor} implies that
\begin{eqnarray*}
\Psi^f_H([q])  & = & \Psi^h_H \circ \Psi^f_h([q]) \\
{} & = & \Psi^h_H([p]),
\end{eqnarray*} 
as required.

\subsection{Filtered Floer cohomology}
\label{filter}
To define the action selectors of the next section we also require  tools from the filtered Floer cohomology theory
introduced  by Floer and Hofer in \cite{fh}. Consider a Floer Hamiltonian $H$ and two real numbers $a< b$ which lie outside of the 
action spectrum $\SC(H)$.  We  define  $\CF_{(a,b)}(H)$ to be the vector space over $\Z_2$ which is generated by the elements of 
$$
\PP_{(a,b)}(H) =\{ x \in \PP(H) \mid a < \AC_H(x)<b \}.
$$
For every map $w \in \MM(x,y;H,J)$ we have 
\begin{equation*}
\label{}
0 \leq \int_{\R \times S^1}  \om (\p_s w, J(w) \p_s w) \, dt \,ds = \AC_H(x) - \AC_H(y).
\end{equation*}
Hence, the Floer coboundary map $\d_J$ decreases actions, and it's restriction to $\CF_{(a,b)}(H)$ again satisfies 
$\d_J^2 =0.$  The filtered Floer cohomology of $H$ for the interval $(a, b)$ is then defined to be
$\HF^*_{(a,b)}(H) = H^*(\CF_{(a,b)}(H), \d_J).$ 

For three real numbers $a < b <c$ which lie outside $\SC(H)$,  $\CF_{(a,b)}(H)$ is  a subcomplex of $\CF_{(a,c)}(H)$ and 
$\CF_{(b,c)}(H)$ is naturally isomorphic to the quotient complex. This yields the following long exact sequence
\begin{equation*}
\label{}
\dots  \rightarrow \HF^{k-1}_{(b,c)}(H) \rightarrow \HF^k_{(a,b)}(H) \rightarrow \HF^k_{(a,c)}(H)\rightarrow \HF^k_{(b,c)}(H)\rightarrow  \HF^{k+1}_{(a,b)}(H) \rightarrow \dots.
\end{equation*}
We will set  $\HF_b(H) = \HF_{(-\infty, b)}(H)$, and will denote the inclusion map
in the exact sequence above by 
\begin{equation*}
i_b \colon  \HF^*_b (H) \to \HF^*(H).
\end{equation*}

As the notation suggests, $\HF_{(a,b)}(H)$ is independent of the choice of the 
family $J$ but depends on the Hamiltonian $H$. As we now recall, there are natural morphisms 
between filtered Floer cohomologies for different Hamiltonians. These maps rely on the useful fact that for
a map $u$ in $\MM_s(x,y;F_s,J_s)$ we have 
\begin{equation}
\label{key}
0 \leq \AC_G(x) - \AC_H(y) + \int_{\R \times S^1} \p_s F_s(s,t,u(s,t)) \, dt \,ds.
\end{equation}

If $G \geq H$ one can consider the class of homotopies $F_s$ from $G$ to $H$ such that $\p_sF_s \leq 0$. These are referred to as {\bf monotone homotopies}. Inequality \eqref{key} implies that a continuation map
$\psi^G_H$ determined by a monotone homotopy induces a chain map from $\CF^*_{(a,b)}(G) \to \CF^*_{(a,b)}(H)$. The map induced in cohomology,
\begin{equation*}
\Psi^G_H \colon \HF^*_{(a,b)}(G) \to \HF^*_{(a,b)}(H),
\end{equation*}
is independent of the choice of monotone homotopy.

More generally, for any two Hamiltonians $G$ and $H$ one can consider homotopies $F_s$ for which the quantity
 \begin{equation*}
\int_{\R \times S^1} \max_{p\in M} \p_s F_s(s,t,p) \, dt \,ds
\end{equation*}
is bounded from above by some constant $C>0$. Following \cite{gi:cois}, we call such homotopies  {\bf $C$-bounded}.
As in the case of monotone homotopies, inequality \eqref{key} implies that each regular homotopy $(F_s, J_s)$ for which $F_s$ is $C$-bounded homotopy 
determines a chain map from
$\CF^*_{(a,b)}(G) \to \CF^*_{(a+C, b+C)}(H)$ and hence a 
homomorphism
\begin{equation}
\label{c-bounded}
\Psi^G_H \colon \HF^*_{(a,b)}(G) \to \HF^*_{(a+C, b+C)}(H).
\end{equation}
Again, this map is independent on the choice of $C$-bounded homotopy, \cite{gi:cois}. 

\begin{Remark}To avoid overly cumbersome notation
we will use  $\Psi_\ast^\ast$ to denote all continuation homomorphisms even when they are defined by special classes of 
$C$-bounded homotopies. This latter fact will be implied, when necessary,  by the inclusion of the domain and target of the homomorphism,
as in equation \eqref{c-bounded}.
\end{Remark}

\begin{Example}
\label{linear}
For any two Floer Hamiltonians $G$ and $H$ consider a \emph{linear homotopy} of  the 
form $$F_s = (1-b(s))G + b(s) H,$$ where $b \colon \R \to [0,1]$ is a nondecreasing function which
equals zero for $s \leq -1$ and equals one for $s\geq 1$.  Note that  
\begin{eqnarray*}
\int_{\R \times S^1} \max_{p\in M} \p_s F_s(s,t,p) \, dt \,ds & = & \int_{\R \times S^1} \max_{p\in M} \dot{b}(s)(H-G)(s,t,p) \, dt \,ds \\
{} & = & \vvv H-G\vvv^+.
\end{eqnarray*}
Hence $F_s$ is a $(\vvv H-G \vvv^+)$-bounded homotopy which can be used to define a homomorphism
\begin{equation}
\label{looky}
\Psi^G_H \colon \HF^*_{(a,b)}(G) \to \HF^*_{(a+\vvv H-G\vvv^+, b+\vvv H-G \vvv  ^+)}(H).
\end{equation}
\end{Example}

Henceforth we will denote a chain map defined using a linear homotopy of Hamiltonians from $G$ to $H$ by $\overline{\psi}^G_H$.
The corresponding map in (filtered) Floer cohomology will be denoted by $\overline{\Psi}^G_H$. 

\begin{Remark}
\label{regular}
To achieve regularity, it might be necessary to perturb 
the precise linear homotopy defined above away from its endpoints. However, any sufficiently small perturbation yields the same 
map in cohomology, and so the notation  $\overline{\Psi}^G_H$ is unambiguous.  Since we are working coefficients in $\Z/2\Z$, the same is true of the chain level
map  $\overline{\psi}^G_H$.  
\end{Remark}

Let $F_s$ be a $C$-bounded homotopy from $G$ to $H$, and let $F'_s$ be a $C'$-bounded homotopy from $H$ to $K$. For sufficiently large $R>0$
$$
F_s \odot_R F'_s = 
\begin{cases}
  F_{s+R}    & \text{for} \,\,s \leq 0, \\
  F'_{s-R}    & \text{otherwise}.
\end{cases}
$$
is a smooth homotopy from $G$ to $K$ which is  $(C+C')$-bounded.

\begin{Lemma}(\cite{gi:cois})
\label{splice}
For sufficiently large $R>0$, the map $$\Psi^G_K \colon \HF_{(a,b)}(G) \to \HF_{(a+C+C', b+C+C')}(K)$$ induced 
by $F_s \odot_R F'_s$ is equivalent to the composition $\Psi^G_H \circ \Psi^H_K$ whose factors are induced by 
$F_s$ and $F'_s.$
\end{Lemma}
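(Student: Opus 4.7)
The strategy is the standard neck-stretching / gluing argument for continuation maps, adapted to respect the action filtration. Fix $x \in \PP(G)$ and $y \in \PP(K)$ with $\cz(x) = \cz(y)$, and consider the moduli space $\MM_s(x,y;F_s \odot_R F'_s, J_s \odot_R J'_s)$, where we splice the families of almost complex structures in the obvious way. Because $F_s = H$ for $s \gg 0$ and $F'_s = H$ for $s \ll 0$, for all sufficiently large $R$ the spliced homotopy equals $H$ on the entire ``neck'' $[-R/2, R/2]$. This localizes the $R$-dependence of the problem to an autonomous Floer cylinder for $H$ of length $R$ in the middle.

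The first step is to establish compactness with breaking. Using inequality \eqref{key} applied to $F_s \odot_R F'_s$, together with the $(C+C')$-boundedness of the spliced homotopy, one obtains a uniform $L^2$ energy bound for any $u \in \MM_s(x,y;F_s \odot_R F'_s, J_s \odot_R J'_s)$, independent of $R$. Standard Gromov--Floer compactness (using symplectic asphericity to rule out bubbling) then shows that as $R \to \infty$ any sequence $u_R$ of such solutions, after appropriate $s$-translations, converges to a broken configuration $(u_1, u_2)$ with $u_1 \in \MM_s(x,z;F_s,J_s)$ and $u_2 \in \MM_s(z,y;F'_s,J'_s)$ for some $z \in \PP(H)$. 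The breaking point $z$ emerges from the autonomous behavior on the neck: on $[-R/2, R/2]$ the equation is Floer's equation for $H$, so the uniform energy bound forces convergence to an $H$-periodic orbit on compact subsets after translation.

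The second step is gluing. By the dimension formula, the only contributions come from pairs $(u_1, u_2)$ of index zero each, and $z$ of Conley-Zehnder index $\cz(x) = \cz(y)$. For any such regular broken trajectory, the standard pre-gluing plus Newton iteration argument (applied with the neck length $R$ as the gluing parameter) produces, for all sufficiently large $R$, a unique element $u_R \in \MM_s(x,y;F_s \odot_R F'_s, J_s \odot_R J'_s)$ near the broken configuration. Combined with the compactness statement, this gives a bijection
\[
\MM_s(x,y;F_s \odot_R F'_s, J_s \odot_R J'_s) \;\longleftrightarrow\; \bigsqcup_{z \in \PP(H)} \MM_s(x,z;F_s,J_s) \times \MM_s(z,y;F'_s,J'_s)
\]
for $R$ sufficiently large, where only index-zero components contribute to the relevant counts. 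Taking mod-$2$ counts of both sides yields the chain-level identity $\psi^G_K = \psi^H_K \circ \psi^G_H$ on generators, hence the identity of induced maps in cohomology.

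Finally, the filtration statement follows automatically: summing the instances of \eqref{key} for $u_1$ and $u_2$ yields
\[
\AC_G(x) - \AC_K(y) + \int_{\R \times S^1} \p_s F_s \, dt \,ds + \int_{\R \times S^1} \p_s F'_s \, dt \,ds \geq 0,
\]
which is exactly the estimate needed to land in $\HF_{(a+C+C', b+C+C')}(K)$. The main technical obstacle is the gluing step, specifically establishing a uniform right-inverse for the linearized operator across the family of broken trajectories with neck length $R \to \infty$; however, this is a by-now routine construction in Floer theory, and no feature of the symplectically aspherical setting interferes with it.
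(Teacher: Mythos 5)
The paper gives no proof of Lemma \ref{splice}; it is cited from \cite{gi:cois}. Your argument is the standard neck-stretching/gluing proof of the composition rule for continuation maps, which is exactly what the cited reference (and, before it, Floer and Salamon) establishes, so your proposal is correct and matches the expected proof. Two small remarks. First, with the paper's definition of $F_s\odot_R F_s'$ the spliced data actually equals $H$ on an interval of length roughly $2R$, not $R$; your $[-R/2,R/2]$ is still contained in it for large $R$, so nothing breaks, but the bookkeeping is slightly off. Second, the filtration bound for the map induced by $F_s\odot_R F_s'$ is obtained most directly by applying \eqref{key} to the unbroken solution $u_R$ together with the observation that $F_s\odot_R F_s'$ is $(C+C')$-bounded (a one-line computation splitting the $s$-integral at $0$); deducing it by summing the energy identities of the broken pieces is valid but conflates the bound one needs for the spliced map with the bound one automatically has for the composite $\psi^H_K\circ\psi^G_H$. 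Neither point affects correctness.
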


We now define a more subtle fundamental class in $\HF_a(H)$ for sufficiently large $a$.
This will be a key part of our definition of a new action selector.

\begin{Proposition}\label{middle}
For every  $a > \rho^+([\phi^t_H]) + H_{av}$ which lies outside of $\SC(H)$, there is a well-defined and nontrivial class $[M]_a$ in $\HF_a^{-n}(H)$ 
with the following properties 
\renewcommand{\theenumi}{\roman{enumi}}
\renewcommand{\labelenumi}{(\theenumi)} 
\begin{enumerate}
\item $i_a([M]_a) = [M]$;
\item If $H$ is a Floer Hamiltonian and  $J$ is regular,  then $[M]_a$ is represented by a cycle 
$\alpha$ in $\CF_a^{-n}(H)$  with  $\AC_H(\alpha) \leq \rho^+([\phi^t_H]) + H_{av}$.
\end{enumerate}
\end{Proposition}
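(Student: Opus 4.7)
The plan is to transfer the fundamental class $[M]$ to a filtered class in $\HF^{-n}_a(H)$ by routing it through an auxiliary Hamiltonian $G\in C^\infty([\phi^t_H])$ whose positive Hofer length is close to $\rho^+([\phi^t_H])$, and then using Seidel's action-preserving identification to return to $H$. Fix $\epsilon>0$ with $\rho^+([\phi^t_H])+H_{av}+\epsilon<a$. Pick $G\in C^\infty([\phi^t_H])$ with $\|G\|^+<\rho^+([\phi^t_H])+\epsilon/3$; adding a function of $t$ alone to $G$ changes neither the flow, the homotopy class of the path, nor $\|G\|^+$, so we may also assume $G_{av}=H_{av}$, whence $\vvv G\vvv^+=\|G\|^++H_{av}$. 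Since $\phi^1_G=\phi^1_H$, the Floer nondegeneracy of $H$ forces that of $G$. Choose $h\in\widehat{\FF}$ satisfying Theorem \ref{morse} with $\max_M h<\epsilon/3$ and $-\min_M h<\epsilon/3$, and let $p$ be its unique global maximum. Then $\CF^{-n}(h)$ is generated by $p$ alone, so $p$ is automatically a cycle and represents the nonzero class $[M]\in\HF^{-n}(h)$.

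Apply the linear-homotopy chain map $\overline{\psi}^h_G$ of Example \ref{linear} (slightly perturbed for regularity as in Remark \ref{regular}). By inequality \eqref{key}, the cycle $\alpha_G\eqdef\overline{\psi}^h_G(p)\in\CF^{-n}(G)$ satisfies
\[
\AC_G(\alpha_G)\le h(p)+\vvv G-h\vvv^+\le h(p)+\vvv G\vvv^+-\min_M h<\rho^+([\phi^t_H])+H_{av}+\epsilon.
\]
Seidel's identification $(\CF^*(G),\d_{\widetilde J})\equiv(\CF^*(H),\d_J)$, with $\widetilde J$ as in \eqref{jtilde}, preserves actions and Conley-Zehnder indices, so it carries $\alpha_G$ to a cycle $\alpha\in\CF^{-n}(H)$ with $\AC_H(\alpha)<\rho^+([\phi^t_H])+H_{av}+\epsilon<a$. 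Set $[M]_a\eqdef[\alpha]\in\HF^{-n}_a(H)$.

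By Theorem \ref{floer}, $\HF^{-n}(H)\cong H_{2n}(M;\Z_2)=\Z_2$, so to verify property (i) it suffices to show that $i_a([M]_a)\ne 0$. The class $\overline{\Psi}^h_G([p])\in\HF^{-n}(G)$ is nonzero, since $\overline{\Psi}^h_G$ is a cohomological isomorphism and $[p]$ is nonzero; hence $[\alpha_G]\ne 0$, and after applying Seidel's isomorphism, $i_a([M]_a)=[\alpha]\ne 0$. Thus $i_a([M]_a)=[M]$, and $[M]_a$ itself is nontrivial. For property (ii), running the construction with arbitrarily small $\epsilon>0$ produces cycles representing $[M]_a$ whose actions approach $\rho^+([\phi^t_H])+H_{av}$ from above, giving the claimed bound.

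The main obstacle is independence of $[M]_a$ from the auxiliary data $(G,h,\epsilon)$. Given a second admissible pair $(G',h')$, one interpolates by concatenating the relevant linear homotopies $h\to h'$ and the homotopy $G\to G'$ through $C^\infty([\phi^t_H])$, and then invokes the splicing Lemma \ref{splice} together with the fact that filtered continuation maps are independent of the choice of $C$-bounded homotopy, to identify the two candidate classes in $\HF^{-n}_a(H)$. The bookkeeping must be executed carefully so that all intermediate action shifts, passed through the various Seidel identifications, remain below $a$.
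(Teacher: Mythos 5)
Your construction of the representative cycle follows the same strategy as the paper's: route $[M]$ from a small Morse function $h$ to an auxiliary $G\in C^\infty([\phi^t_H])$ with controlled positive Hofer length via a linear homotopy, bound its action using \eqref{key}, and transfer to $H$ via the Seidel isomorphism. That much is right. But there are two gaps, one minor and one serious.

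The minor gap is in property (ii). Your $\epsilon$-bookkeeping only gives $\AC_H(\alpha)<\rho^+([\phi^t_H])+H_{av}+\epsilon$, and ``actions approach the bound from above as $\epsilon\to 0$'' is not the claimed $\AC_H(\alpha)\le \rho^+([\phi^t_H])+H_{av}$. To finish, you would have to observe that $\AC_H(\alpha)$ lies in $\SC(H)$, that $\SC(H)$ is finite because $H$ is a Floer Hamiltonian, and that $\SC(G)=\SC(H)$ because $\phi^1_G=\phi^1_H$ and $G_{av}=H_{av}$. The paper sidesteps this by introducing $\Delta^+_H=\min\{c\in\SC(H)\mid c>\rho^+([\phi^t_H])+H_{av}\}$ and arranging the action of $\alpha'$ to fall below $\min\{a,\Delta^+_H\}$ from the start, which delivers the non-strict bound cleanly. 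Moreover, your appeal to ``arbitrarily small $\epsilon$'' presupposes that different $\epsilon$ (i.e.\ different $G$) give the same class $[M]_a$ — precisely the well-definedness you defer.

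The serious gap is exactly that deferral. Independence of $[M]_a$ from the choice of $G$ is not a routine splicing argument, and it is where the paper spends the bulk of the proof. Your plan — concatenate the linear homotopy $h\to h'$ with ``the homotopy $G\to G'$ through $C^\infty([\phi^t_H])$, then invoke Lemma \ref{splice}'' — does not work as stated. A homotopy $H_s$ through $C^\infty([\phi^t_H])$ is neither monotone nor controllably $C$-bounded in general; the induced continuation map shifts the action filtration by $\int\max\partial_sH_s$, which need not be small even though the endpoints $G,G'$ both have small $\vvv\cdot\vvv^+$. So after splicing you would land in $\HF_{a'}$ for some $a'$ potentially much larger than $a$, and there would be nothing to compare. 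The paper's actual argument is of a different nature: it conjugates the continuation equation by the family of Hamiltonian loops $\varrho_{s,t}=\phi^t_{\widetilde F_s}\circ(\phi^t_{G_s})^{-1}$, producing a ``twisted'' Floer-type equation \eqref{utwist} with an extra $X_{A_s}$ term, and then builds an explicit parametrized cobordism $\WW(p,y;\widetilde F_s^\lambda,A_s^\lambda,\widetilde J_s^\lambda)$ between the two moduli spaces $\MM_s(p,x;G_s,J^G_s)$ and $\MM_s(p,y;F_s,J^F_s)$. This proves equality already at the chain level, $\alpha_F=\overline\psi^f_G(p)$, which is strictly stronger than a cohomological identification and avoids the filtration-shifting problem altogether. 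Your sketch misses this entirely, and it is the heart of the proposition.
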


\begin{proof}

Set 
$$
\Delta^+_H = \min \{ c \in \SC(H) \mid c > \rho^+([\phi^t_H]) + H_{av} \}.
$$
Choose  a Hamiltonian $G$ in $C^{\infty}([\phi^t_H])$ such that  $G_{av} = H_{av}$
and 
\begin{equation}
\label{gbound}
\vvv G \vvv^+ < \min\{a, \Delta^+_H\}.
\end{equation}  Let 
 $f$ be a Morse function in $\widehat{\FF}$ which satisfies  
\begin{equation}
\label{fbound}
2 \| f \| < \min\{ (a - \vvv G \vvv^+), (\Delta^+_H -\vvv G \vvv^+)\}.
\end{equation}
Set  $$\alpha' = \overline{\psi}^{f}_{G}(p),$$
where $p$ is the unique local maximum of $f$. 
Since we are working only with $\Z_2$-coefficients, 
the class $\alpha'$ is uniquely determined by $f$. 
Moreover, it follows from Example \ref{linear} together with \eqref{gbound} and \eqref{fbound} that
\begin{eqnarray*}
\AC_G(\alpha') & \leq & f(p) +\vvv G-f \vvv^+ \\
{} & \leq & \vvv G \vvv^+ + \|f\| \\
{} & < & \min\{a, \Delta^+_H\}. 
\end{eqnarray*}

Since $\phi^1_G=\phi^1_H$ and $G_{av} = H_{av}$,  we have $\SC(G) =\SC(H)$ and so 
\begin{equation*}
\label{ }
\AC_G(\alpha') \leq \rho^+( [\phi^t_H]) +H_{av}.
\end{equation*}
In fact, for all $a$ we have 
\begin{equation*}
\label{identa}
(\CF^*_a(G), \d_{\widetilde{J}}) \equiv (\CF^*_a(H), \d_J).
\end{equation*}
where $\widetilde{J}$ is defined as in \eqref{jtilde}.
Because $\overline{\psi}^{f}_{G}(p)$ belongs to $\CF^{-n}_{a}(G)$, 
 we can therefore identify $\alpha'$ with a  class $\alpha$ in $\CF_a^{-n}(H)$ 
such that 
\begin{equation*}
\AC_H(\alpha) = \AC_G(\alpha') \leq \rho^+([\phi^t_H]) + H_{av}.
\end{equation*}
The definition of the fundamental class then implies that  for $a > \rho^+([\phi^t_H]) + H_{av}$
$$i_a([\alpha']) = [\overline{\psi}^{f}_{G}(p)] = [M] \in \HF(G),$$ and hence 
$$i_a([\alpha]) =[M] \in \HF(H).$$
Setting $$[M]_a = [\alpha],$$ it only remains to show that this class is well-defined. 
Arguing as for the fundamental class $[M]$, one can show that $[\alpha']$ and hence $[\alpha]$ does not depend on 
the choice of the Morse function $f$ in $\widehat{\FF}$. The proof that $[M]_a$ is independent of the choice of $G$ is 
more involved.

Let $F$ be another Hamiltonian in  $C^{\infty}([\phi^t_H])$ which satisfies  $F_{av} = H_{av}$
and $\vvv F \vvv^+ < \min\{a, \Delta^+_H\}.$ Choose a Morse function $f \in \widehat{\FF}$ such that 
$$2\|f\| < \min\left\{ (a -\vvv G \vvv^+), (\Delta^+_H -\vvv G \vvv^+),(a - \vvv F \vvv^+), (\Delta^+_H -\vvv F \vvv^+)\right\}.$$ 
If $\alpha_F$ is the class in $\CF^{-n}_{a}(G)$ which corresponds to $\overline{\psi}^{f}_{F}(p)$
under the identification 
\begin{equation}
\label{identfg}
(\CF_a(F), \d_{\widetilde{J^G}}) \equiv (\CF_a(G), \d_{J^G}),
\end{equation}
then it suffices for us to show that $[\alpha_F] = \left[ \overline{\psi}^{f}_{G}(p)\right]$. With the Morse function
$f$ fixed as above, we will actually prove that 
\begin{equation}
\label{step}
\alpha_F = \overline{\psi}^{f}_{G}(p).
\end{equation}

Let $x$ be any element in $\PP(G)$ with Conley-Zehnder index equal to $-n$. Set
$\varrho_t = \phi^t_{F} \circ (\phi^t_G)^{-1}$.
In the identification \eqref{identfg}, $x$ corresponds to  the orbit
$y(t) = \varrho_t(x(t))$ in $\PP(F)$. To verify \eqref{step}, we need to prove that 
\begin{equation}
\label{moduli}
\#_2\MM_s(p,x; G_s, J^G_s)
=
\#_2\MM_s(p,y;F_s, J^F_s),
\end{equation}
where $G_s$ and $F_s$ are small perturbations of the linear homotopies from 
$f$ to $G$ and  from $f$ to $F$ described in Example \ref{linear} (see Remark \ref{regular}).
Here $J^G_s$ and $J^F_s$ are regular families of almost complex structures.  
The choice of these families does not effect the count, modulo two, of the moduli spaces in \eqref{moduli} which they help define.

Since both $F$ and $G$ belong to $C^{\infty}([\phi^t_H])$ we can choose a family of functions $H_s$ in $C^{\infty}([\phi^t_H])$
such that $H_s = G$ for $s \leq -1$ and $H_s = F$ for $s \geq 1$. Define $\widetilde{F}_s$ by 
$$
\widetilde{F}_s = 
\left\{
  \begin{array}{lll}
G_s & \hbox{for $s \leq 1$;} \\
    H_{s-2}, & \hbox{for $1 \leq s \leq 3 $;}\\
F, & \hbox{for $s \geq 3 $.}
   \end{array}
\right.
$$
Consider the family of contractible Hamiltonian loops 
$$\varrho_{s,t}= \phi^t_{\widetilde{F}_s} \circ (\phi^t_{G_s})^{-1},$$
and let
$$\widetilde{J}_s= d \varrho_{s,t} \circ J^G_s \circ d (\varrho_{s,t}^{-1}).$$
For each value of $s$, $\varrho_{s,t}$ is a loop based at the identity,
and 
$$
\varrho_{s,t}= 
\left\{
  \begin{array}{ll}
    \id, & \hbox{for $s \leq 1$;} \\
    \phi^t_{H_{s-2}} \circ (\phi^t_{G})^{-1}, & \hbox{for $1 \leq s \leq 3$;} \\
    \varrho_t, & \hbox{for $s \geq 3$.}
   \end{array}
\right.
$$

From $\varrho_{s,t}$ we obtain two families of normalized Hamiltonians, $A_s$ and $B_s$, defined by  
$$ \p_s (\varrho_{s,t}(p)) = X_{A_s}(\varrho_{s,t}(p))$$
and
$$ \p_t (\varrho_{s,t}(p)) = X_{B_s}(\varrho_{s,t}(p)).$$
The standard composition formula for Hamiltonian flows implies that
$${B_s}= \widetilde{F}_s -G_s \circ \varrho^{-1}_{s,t},$$
where $(G_s \circ \varrho^{-1}_{s,t}) (t,p) = G_s(t,\varrho^{-1}_{s,t}(p))$. 

To prove \eqref{moduli}, we first note that the image of $\MM_s(p,x; G_s, J^G_s)$ under the map
$$u(s,t) \mapsto \varrho_{s,t}( u(s,t))$$
is the space $\MM_s(p,y; \widetilde{F}_s, A_s, \widetilde{J}_s)$ of smooth maps $v \colon \R \times S^1 \to M$ which satisfy the equation
\begin{equation}
\label{utwist}
\partial_s v- X_{A_s}(v)+\widetilde{J}_s(v)(\partial_t v - X_{\widetilde{F}_s}(v))=0,
\end{equation}
and have the following asymptotic behavior:
$$\lim_{s \to -\infty} v(s,t) = p
\,\,\text{  and  }
\lim_{s \to +\infty} v(s,t) = y(t).$$

To see this, note that the desired limiting behavior of $v = \varrho_{s,t}(u) \in \MM_s(p,y; \widetilde{F}_s, A_s, \widetilde{J}_s)$ is determined 
by that of $u$. The fact that $v$ satisfies equation \eqref{utwist} then follows from the following simple computation
\begin{eqnarray*}
  \p_s v + \widetilde{J}_s(v)\p_t v &=& d(\varrho_{s,t}) \p_su +
  X_{A_s}(v)
   + \widetilde{J}_s( v)\left( d(\varrho_{s,t}) \p_tu + X_{B_s}(v)\right) \\
  {} &=& d(\varrho_{s,t})\left( \p_su + J_s(u)\p_tu\right) + X_{A_s}(v)+  \widetilde{J}_s(v)X_{B_s}(v)\\
  {} &=& d(\varrho_{s,t})J_s(u)X_{G_s}(u) + X_{A_s}(v)+  \widetilde{J}_s(v)X_{B_s}(v) \\
  {} &=&  \widetilde{J}_s(v)\left( d(\varrho_{s,t})X_{G_s}(u) \right) + X_{A_s}(v)+  \widetilde{J}_s(v)X_{B_s}(v) \\
  {} &=&  \widetilde{J}_s(v)X_{G_s \circ \varrho_{s,t}^{-1}}(v) + X_{A_s}(v)+  \widetilde{J}_s(v)X_{B_s}(v)\\
  {} &=& X_{A_s}(v) + \widetilde{J}_s(v)X_{\widetilde{F}_s}(v).
\end{eqnarray*}

For $\lambda \in \R$, let $(\widetilde{F}^{\lambda}_s, A^{\lambda}_s, \widetilde{J}^{\lambda}_s)$ be  a triple of smooth families  of functions and $\om$-compatible almost complex structures such that 
\begin{itemize}
  \item $(\widetilde{F}^{\lambda}_s, A^{\lambda}_s, \widetilde{J}^{\lambda}_s) = (\widetilde{F}_s, A_s, \widetilde{J}_s)$
  for $\lambda \leq -1$;
  \item $(\widetilde{F}^{\lambda}_s, A^{\lambda}_s, \widetilde{J}^{\lambda}_s) = (F_s, 0,J^F_s)$
  for $\lambda \geq 1$;
  \item For some $\tau>0$, $A^{\lambda}_s =0$ for all $|s| \geq \tau$ and $\lambda \in \R$.
\end{itemize}

Let $\WW(p,y;\widetilde{F}^{\lambda}_s, A^{\lambda}_s, \widetilde{J}^{\lambda}_s )$ be the space of maps $v \colon \R \times S^1 \to M$
such that 
\begin{equation*}
\label{}
\partial_s v- X_{A^{\lambda}_s}(v)+\widetilde{J}^{\lambda}_s(v)(\partial_t v - X_{\widetilde{F}^{\lambda}_s}(v))=0,
\end{equation*}
$$\lim_{s \to -\infty} v(s,t) = p$$
and 
$$\lim_{s \to +\infty} v(s,t) = y(t).$$

For a generic choice of the family $\widetilde{J}^{\lambda}_s$, the space $\WW(p,y;\widetilde{F}^{\lambda}_s, A^{\lambda}_s, \widetilde{J}^{\lambda}_s )$ is a smooth cobordism between $\MM_s(p,y; \widetilde{F}_s, A_s, \widetilde{J}_s)$ and $\MM_s(p,y; F_s, 0, J^F_s)= \MM_s(p,y; F_s, J^F_s)$. Hence,
$$
\#_2\MM_s(p,x; G_s, J^G_s)= \#_2\MM_s(p,y; \widetilde{F}_s, A_s, \widetilde{J}_s)= \#_2\MM_s(p,y; F_s, J^F_s),
$$
as required.

\end{proof}

\begin{Remark}
\label{simple}
For $a> \vvv H \vvv^+$, we can choose $G=H$ in the construction of $[M]_a$. In this case, 
\begin{equation*}
 [M]_a =\overline{ \Psi}^f_H ([p])
\end{equation*}
for any function $f \in \widehat{\FF}$ with $2 \|f\| < a- \vvv H \vvv^+.$
\end{Remark}

\section{AN ACTION SELECTOR FOR PINNED HAMILTONIANS}



%


We now define a new action selector $\widehat{\sigma}$ for a  special, but useful, class of Hamiltonians.
\begin{Definition}
We say that a Hamiltonian $H$ is {\bf pinned} if there is a point $Q\in M$ such that  
\begin{itemize}
\item  For all $(t,p) \in [0,1] \times M$ we have $$ H(t,Q) \geq H(t,p),$$ with equality only along $[0,1] \times \{Q\}$;
 \item The Hessian of $H_t$ at $Q$ is nondegenerate for all $t \in [0,1]$, and the linearized flow $d \phi^t_H \colon T_Q M \to T_QM$  
 has no nonconstant periodic orbits with period less than or equal to one.
 \end{itemize}
 \end{Definition}
 
Pinned Hamiltonians arise naturally in the study of the length minimizing properties of Hamiltonian paths with respect to Hofer lengths. In particular, slightly weaker versions of these two properties are necessary  for a Hamiltonian to generate a positive length minimizing path in its homotopy class, \cite{bp, lm, ust}.

In what follows, the point $Q$ at which a pinned Hamiltonian attains its maximum values will be specified.
In this case, we say that $H$ is {\bf pinned at $Q$}.

 \begin{Definition} 
Let $H$ be a Hamiltonian  that is pinned at $Q$. A Morse function $f$ on $M$  is said to {\bf dominate}  $H$ if
 \begin{itemize}
  \item The maximum value of $f$ is zero and is only achieved at  $Q$;
  \item  The function 
 \begin{equation*}
f_H (t,p) \eqdef f(p) + H(t,Q).
\end{equation*} 
is greater than or equal to $H$, with equality only along $[0,1] \times \{Q\}$.
\item  There are no nonconstant $1$-periodic orbits of the Hamiltonian flow of $f$, i.e. $\PP(f) = \Crit(f)$.
\end{itemize}
\end{Definition}
Every pinned Hamiltonian $H$ is dominated by some Morse function. In particular, if $f$ is a Morse function with a 
unique local maximum at $Q$, then $\eps f$ dominates $H$ for small enough $\eps > 0$.

At the chain level we have the following  result.

\begin{Lemma}
\label{down}
Let $H$ be a Floer Hamiltonian which is pinned at $Q$ and let $f$ be a Morse function which dominates $H$.
Then for the Floer continuation map defined by a monotone linear homotopy from $f_H$ to $H$ we have 
 $$\overline{\psi}^{f_H}_H(Q) = Q +\beta$$ for some $\beta$ in $\CF_{-n}(H)$ with
\begin{equation*}
\AC_H(\beta) < \AC_H(Q) = \vvv H \vvv^+.
\end{equation*}
\end{Lemma}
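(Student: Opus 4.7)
The plan is to analyze $\overline{\psi}^{f_H}_H(Q)$ via the monotone action-energy inequality \eqref{key} and isolate the contribution of the constant trajectory at $Q$. Since $f(Q)=0$ and $Q$ is a critical point of both $f$ and of each $H_t$, the constant loop at $Q$ is a $1$-periodic orbit of both $f_H$ and $H$, with
\begin{equation*}
\AC_{f_H}(Q) = \int_0^1 \bigl( f(Q) + H(t,Q) \bigr)\, dt = \vvv H \vvv^+ = \AC_H(Q).
\end{equation*}
The linear homotopy $F_s = (1-b(s)) f_H + b(s) H$ is monotone because $\p_s F_s = \dot b(s)(H - f_H) \leq 0$; applying \eqref{key} to any $v\in\MM_s(Q, y; F_s, J_s)$ yields $\AC_H(y) \leq \AC_{f_H}(Q) = \vvv H \vvv^+$, with equality forcing both the energy of $v$ and $-\int \p_s F_s$ to vanish.

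Next I would isolate the equality case. Vanishing of the energy forces $v$ to be $s$-independent, and since the input loop is the constant map at the point $Q$, this implies $v(s,t) \equiv Q$ and hence $y \equiv Q$. Thus the only orbit in the support of $\overline{\psi}^{f_H}_H(Q)$ with action equal to $\vvv H \vvv^+$ is the constant loop at $Q$, and the remaining contributions assemble into a chain $\beta \in \CF_{-n}(H)$ satisfying $\AC_H(\beta) < \vvv H \vvv^+$, provided the coefficient of $Q$ in $\overline{\psi}^{f_H}_H(Q)$ is nonzero modulo two.

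It remains to verify this coefficient. Since $dF_s|_Q = (1-b(s))df_H|_Q + b(s)dH_t|_Q = 0$, the vector field $X_{F_s}$ vanishes at $Q$, so the constant map $v(s,t)\equiv Q$ is a continuation trajectory, and by the preceding paragraph it is the unique such trajectory from $Q$ to $Q$. The pinning hypothesis, combined with the smallness of $f$ implicit in the domination hypothesis, forces $\cz(Q)=-n$ as an orbit of both $f_H$ and $H$, so the linearization of the continuation operator at $v\equiv Q$ is Fredholm of index zero. The main obstacle is to verify that this linearization is an isomorphism so that $v\equiv Q$ is transversally cut out; I would address this via a $C^\infty$-small generic perturbation of $(F_s, J_s)$ that preserves monotonicity and fixes the value $F_s(t,Q) = H(t,Q)$. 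Such a perturbation retains the action analysis verbatim, preserves the uniqueness of the constant trajectory at top action, and renders it transversally cut out. Its mod-two count is therefore $1$, establishing $\overline{\psi}^{f_H}_H(Q) = Q + \beta$ as claimed.
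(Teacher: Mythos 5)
Your argument follows the same line as the paper's: combine the action--energy inequality \eqref{key} with the monotonicity of the linear homotopy to bound the action of everything in the image of $\overline{\psi}^{f_H}_H(Q)$ by $\vvv H\vvv^+$, identify the constant map at $Q$ as the unique top-action continuation trajectory, and then verify that this trajectory contributes $1$ modulo $2$. The action computation $\AC_{f_H}(Q)=\vvv H\vvv^+=\AC_H(Q)$ and the observation that vanishing energy forces $s$-independence are exactly right.

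The one place to tighten is the final regularity step. You propose a ``$C^\infty$-small generic perturbation of $(F_s,J_s)$ that preserves monotonicity and fixes the value $F_s(t,Q)$,'' but fixing the \emph{value} is not the relevant constraint --- you would need $Q$ to remain a critical point of $F'_s$ for all $s$ so that the constant trajectory survives as a solution --- and, more seriously, constant continuation trajectories are never somewhere-injective, so generic perturbation of $J$ does not automatically render them transversally cut out. Regularity of the constant solution instead has to be argued through the index and spectral flow of the linearized operator, which is where the pinning hypothesis (nondegenerate Hessian at $Q$, no short periodic orbits of $d\phi^t_H$) does real work. The paper sidesteps spelling this out by perturbing $F_s$ to a generic monotone $F'_s$ (allowing the constant trajectory to disappear) and then citing Lemma 2.5 of \cite{gi:con} for the mod-2 count $\#_2\MM_s(Q,Q;F'_s,J_s)=1$. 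So your proof is correct in outline and in the same spirit as the paper's, but the transversality claim should be replaced by an appeal to that lemma (or an explicit index/spectral-flow argument), rather than asserted to follow from a generic perturbation.
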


\begin{proof}
The bounds on the action 
of $\beta$ follow from inequality \eqref{key} and the fact that our homotopy  from $f_H$ to $H$ is monotone.
If one presumes that the linear homotopy $F_s = (1-b(s))f_H + b(s)H$ is part of a regular set of continuation data
$(F_s,J_s)$, then by \eqref{key} the only element of $\MM_s(Q,Q;F_s, J_s)$ is the constant map $u(s,t)=Q$
and we are done. Without this presumption, one must  prove  that the 
number of elements in $\MM_s(Q,Q;F'_s, J_s)$ is equal to one modulo two, 
where $F'_s$ is a generic monotone perturbation of $F_s$ for which $(F'_s, J_s)$ is regular. This can be established  as in Lemma 2.5 of \cite{gi:con}.

\end{proof}

We can now define another version of the fundamental class for the filtered Floer cohomology of pinned Hamiltonians. 

\begin{Proposition}
\label{in}
Let $H$ be a Hamiltonian which is pinned at $Q$. For every $a > \vvv H\vvv^+$ which lies outside of $\SC(H)$  there is a well-defined and nontrivial class $[\widehat{M}]_a$ in $\HF_a^{-n}(H)$ with the following properties
\renewcommand{\theenumi}{\roman{enumi}}
\renewcommand{\labelenumi}{(\theenumi)} 
\begin{enumerate}
\item $i_a ([\widehat{M}]_a) = [M]$;
\item If $H$ is a Floer Hamiltonian and $J$ is regular, then $[\widehat{M}]_a$ is represented by a cycle $\gamma=Q +\beta$ in $\CF_a^{-n}(H)$
such that $\AC_H(\beta)  < \vvv H\vvv^+ .$
\end{enumerate}
\end{Proposition}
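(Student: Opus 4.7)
The plan is to adapt the construction of $[M]_a$ from Proposition~\ref{middle} to the pinned setting, using a dominating Morse function in place of a general representative $G \in C^{\infty}([\phi^t_H])$. The new key input is Lemma~\ref{down}, which provides the sharp action bound needed for property~(ii).

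First, fix a Morse function $f$ dominating $H$, chosen to lie in $\widehat{\FF}$ with unique maximum at $Q$ and maximum value zero; such an $f$ exists by the remark following Definition~4.2 (after a suitable scaling). The time-dependent Hamiltonian $f_H(t,p) = f(p) + H(t,Q)$ satisfies $f_H \geq H$ pointwise, with equality exactly along $[0,1] \times \{Q\}$. Since $f$ and $f_H$ differ only by a $t$-dependent constant, they generate the same Hamiltonian flow, so $\PP(f_H) = \Crit(f)$ and $Q$ is a cycle in $\CF^{-n}(f_H)$ of action $\vvv H \vvv^+$. Applying Lemma~\ref{down} to the monotone linear homotopy from $f_H$ to $H$ yields
\[
\gamma \,:=\, \overline{\psi}^{f_H}_H(Q) \,=\, Q + \beta \;\in\; \CF^{-n}(H)
\]
with $\AC_H(\beta) < \vvv H \vvv^+ = \AC_H(Q)$. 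Since $a > \vvv H \vvv^+$, $\gamma$ lies in $\CF^{-n}_a(H)$ and is a cycle there, so setting $[\widehat{M}]_a := [\gamma]$ produces a class in $\HF^{-n}_a(H)$ for which property~(ii) is immediate.

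For property~(i), the factorization identity (3.1) applied to the composition $f \to f_H \to H$ gives $\overline{\Psi}^f_H = \overline{\Psi}^{f_H}_H \circ \overline{\Psi}^{f}_{f_H}$ in unfiltered $\HF$. Because $f$ and $f_H$ have identical Hamiltonian vector fields, their Floer complexes coincide, and $\overline{\Psi}^{f}_{f_H}$ is the identity on $\HF$ (in particular $[Q] \mapsto [Q]$). Hence
\[
i_a([\widehat{M}]_a) \,=\, \overline{\Psi}^{f_H}_H([Q]) \,=\, \overline{\Psi}^f_H([Q]) \,=\, [M],
\]
the last equality being the definition of $[M]$ in Subsection~\ref{fundamental}, since $f \in \widehat{\FF}$ has maximum $Q$. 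This also gives nontriviality of $[\widehat{M}]_a$.

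For well-definedness (independence of the choice of dominating $f$), I would identify $[\widehat{M}]_a$ with the class $[M]_a$ of Proposition~\ref{middle}, which is already known to be well-defined. By Remark~\ref{simple}, for any sufficiently small $\tilde f \in \widehat{\FF}$ with unique maximum at $Q$ one has $[M]_a = \overline{\Psi}^{\tilde f}_H([Q])$ in $\HF^{-n}_a(H)$. Choosing $\tilde f$ also to dominate $H$, one promotes the unfiltered argument above to a filtered one: replace the linear homotopy $\tilde f \to H$ by the spliced homotopy $\tilde f \to \tilde f_H \to H$ (monotone on the second leg), and invoke Lemma~\ref{splice} to identify the two induced maps $\HF^{-n}_c(\tilde f) \to \HF^{-n}_a(H)$ in filtered cohomology. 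The main obstacle is the filtered bookkeeping: the direct and spliced homotopies are $C$-bounded for different constants (namely $\vvv H - \tilde f\vvv^+$ versus $\vvv H\vvv^+$, differing by $O(\|\tilde f\|)$), so one must choose $\tilde f$ small enough that both maps factor through $\HF^{-n}_a(H)$ and then appeal to the independence of continuation maps within a common $C$-bounded class. With this identification in place, $[M]_a = [\widehat{M}]_a$, and well-definedness of $[\widehat{M}]_a$ follows.
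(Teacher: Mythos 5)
Your construction of the class and your arguments for properties (i) and (ii) match the paper's: you take $[\widehat{M}]_a := \overline{\Psi}^{f_H}_H([Q])$ for a dominating $f$, use Lemma~\ref{down} for the action bound, and use the fact that $f_H$ and $f$ have the same Hamiltonian vector field to reduce $\Psi^{f_H}_H([Q])$ to $\Psi^f_H([Q])=[M]$ in unfiltered cohomology. So far this is the paper's proof.

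The well-definedness argument is where you diverge, and this is where the trouble lies. You propose to show $[\widehat{M}]_a = [M]_a$ and then inherit well-definedness from Proposition~\ref{middle}. But this identification cannot hold for the Hamiltonians the paper cares about: for $H \in \HH(Q)$ and $a$ just above $\vvv H\vvv^+$, Section~4 shows explicitly that $[\widehat{M}]_a = [\gamma]$ and $[M]_a = [\alpha]$ differ, because $\gamma - \alpha = Q + (\beta-\alpha)$ carries $Q$ with coefficient one while $Q$ has maximal action in $\CF_a(H)$ and so cannot be a coboundary there. Indeed, if $[\widehat{M}]_a$ were always equal to $[M]_a$ for $a > \vvv H\vvv^+$, then $\widehat{\sigma}(H)$ would equal $\vvv H\vvv^+$ by definition and the whole point of the new selector would collapse. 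A second, more concrete slip: your bookkeeping claim that the $C$-bounds $\vvv H - \tilde f\vvv^+$ and $\vvv H\vvv^+$ ``differ by $O(\|\tilde f\|)$'' and so can be reconciled by shrinking $\tilde f$ is not right — when $\tilde f$ dominates $H$ with maximum value $0$ at $Q$, the domination inequality $\tilde f(p) \geq H_t(p) - H_t(Q)$ forces $\max_p(H_t(p)-\tilde f(p)) = H_t(Q)$, so $\vvv H - \tilde f\vvv^+ = \vvv H\vvv^+$ exactly, independently of how small $\tilde f$ is. The leg $\tilde f \to \tilde f_H$ intrinsically carries the full bound $\vvv H\vvv^+$, not something shrinking with $\tilde f$.

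The paper's proof of well-definedness avoids all of this by never comparing $f$ with $f_H$. Given two dominating functions $f$ and $h$ (both small relative to the spectral gap $\Delta^+_a - a$), it factors $\overline{\Psi}^{f_H}_H = \overline{\Psi}^{h_H}_H \circ \overline{\Psi}^{f_H}_{h_H}$ using Lemma~\ref{splice}, where the first leg $f_H \to h_H$ is $\vvv f_H - h_H\vvv^+$-bounded and this quantity really is $O(\|f\| + \|h\|)$ small; then $\overline{\Psi}^{f_H}_{h_H}([Q]) = [Q]$ and the independence of $f$ follows without ever touching $[M]_a$. You should replace your well-definedness step with an argument of this shape.
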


\begin{proof}
By the continuity of the filtered Floer homology we may assume the $H$ is a Floer Hamiltonian.
For
$
\Delta^+_a= \min\{ c \in \SC(H) \mid c > a\},
$
let $f$ be a dominating function for $H$ such that $2\|f\| < \Delta^+_a-a$ and set
\begin{equation*}
[\widehat{M}]_a= \overline{\Psi}^{f_H}_H([Q]). 
\end{equation*}  
Clearly, $\Crit(f)$ is equal to $\PP(f_H)$.  As well, any regular homotopy from $(f,J')$ to $(H,J)$ induces a regular homotopy from $(f_H,J')$ to $(H,J)$ such that 
the corresponding maps $\psi^{f}_H$ and $\psi^{f_H}_H$ are identical. Hence,  $\Psi^{f_H}_H([Q])= [M]$,  for any 
 dominating function $f$.  This implies the first property of $[\widehat{M}]_a$. The second property follows immediately from Lemma \ref{down}.
 
 It only remains to show that $[\widehat{M}]_a$ does not depend on the choice of $f$. Let $h$ be another 
 Morse function which dominates $H$ and satisfies $2\|h\| < (\Delta^+_a-a)$. It follows from Lemma \ref{splice} and our choice of $\Delta^+_a$ that
\begin{equation*}
\label{}
\overline{\Psi}^{f_H}_H = \overline{ \Psi}^{h_H}_H \circ \overline{\Psi}^{f_H}_{h_H}   \colon  \HF_{a}(f_H) \to \HF_{a +\vvv f_H -h_H\vvv^+}(H) = \HF_a(H).
\end{equation*}
Hence
\begin{eqnarray*}
\overline{\Psi}^{f_H}_H([Q]) & = &\overline{ \Psi}^{h_H}_H \circ \overline{\Psi}^{f_H}_{h_H}([Q]) \\
{} & = & \overline{ \Psi}^{h_H}_H([Q]),
\end{eqnarray*}
as required.
\end{proof}

We now consider the class of Hamiltonians which are pinned and whose flows do not minimize the 
positive Hofer norm. Set
\begin{equation*}
\HH(Q) = \{ H \in C^{\infty}(S^1 \times M) \mid H \text{ is pinned at $Q$}, \, \|H\|^+ > \rho^+([\phi^t_H])\}.
\end{equation*}
Let $H$ be a Floer Hamiltonian in $\HH(Q)$ and consider an $a > \vvv H \vvv^+$ which lies outside of $\SC(H)$.  By Proposition \ref{middle}, for any regular $J$ there is a
cycle  $\alpha$ in $\CF_a^{-n}(H)$ such that $[\alpha] = [M]_a$
and
\begin{equation}
\label{action ineq1}
\AC_H(\alpha) \leq \rho^+([\phi^t_H]) +H_{av} < \vvv H \vvv^+.
\end{equation}
Proposition \ref{in} implies that, again for a regular $J$, there is a cycle 
$\gamma =Q+\beta$ in $\CF_a^{-n}(H)$ such that 
\begin{equation*}
i_a([\gamma]) =[M]
\end{equation*}
and
\begin{equation}
\label{action ineq2}
\AC_H(\beta) \leq \rho^+([\phi^t_H]) +H_{av} < \vvv H \vvv^+ = \AC_H(Q).
\end{equation}
If $a$ is sufficiently large, for example if $a$ is greater than $\max\{\AC_H(x) \mid x \in \PP(H) \}$, then we have $$[\alpha] = [\gamma] \in \HF_a(H).$$
However, for values of $a$ just slightly larger than $ \vvv H \vvv^+$ we have $$[\alpha] \neq [\gamma] \in \HF_a(H).$$ To see this,
note that the action inequalities \eqref{action ineq1}  and  \eqref{action ineq2} imply that  $Q$ appears with coefficient one in $\gamma - \alpha = Q+\beta-\alpha$. Since $H$ is a Floer Hamiltonian and $\AC_H(Q) = \vvv H \vvv^+$, for values of $a$ just slightly larger than $ \vvv H \vvv^+$,  the orbit $Q$ is not in the image of $\d_J \colon \CF_a(H) \to \CF_a(H).$ In particular, for values of $a$ close enough to $ \vvv H \vvv^+$ there are no elements 
of $\PP(H)$ which have action in the interval  $( \vvv H \vvv^+ , a]$. 
Hence,  $[Q+\beta-\alpha]$
is not trivial in $\HF_a(H)$ and $[\alpha] \neq [\gamma]$. 

This change in behavior as $a$ approaches $ \vvv H \vvv^+$ from infinity, leads one to the following  definition. For Floer Hamiltonian $H$ in $\HH(Q)$ set
\begin{equation*}
\widehat{\sigma}(H) = \inf \left\{  a >  \vvv H \vvv^+ \mid   [\widehat{M}]_a =  [M]_a \in \HF_a(H) \right\}.
\end{equation*}
\begin{Remark}
By the discussion above, we have $\widehat{\sigma}(H) > \vvv H \vvv^+$. Hence, $\widehat{\sigma}(H) $ is not equal to any of the action selectors defined  by Schwarz in \cite{schw} for closed, symplectically aspherical symplectic manifolds. 
\end{Remark}
\begin{Lemma}
\label{realize2}
If $H \in \HH(Q)$ is a Floer Hamiltonian, then there is an element $\widehat{x}$ in $\PP(H)$ such that 
$\cz(\widehat{x}) = -n-1$ and $\AC_H(\widehat{x})=\widehat{\sigma}(H).$
 \end{Lemma}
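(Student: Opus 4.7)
The plan is to identify $c := \widehat{\sigma}(H)$ as a critical level of the action functional by showing (i) $c \in \SC(H)$, and (ii) the filtered long exact sequence forces an orbit of Conley--Zehnder index $-n-1$ and action $c$ to exist. First note that $c$ is finite, since the inclusion $\HF_a(H) \to \HF(H)$ is an isomorphism for $a$ greater than $\max \SC(H)$, forcing $[M]_a = [\widehat{M}]_a = [M]$ in that range. The essential supporting fact is naturality under inclusion: by choosing the auxiliary Hamiltonian $G \in C^{\infty}([\phi^t_H])$ and the Morse function $f \in \widehat{\FF}$ used in Propositions \ref{middle} and \ref{in} small enough, the cycles $\alpha$ and $\gamma = Q+\beta$ representing $[M]_a$ and $[\widehat M]_a$ both have $H$-action at most $\vvv H\vvv^+$, so the same chain-level representatives sit inside $\CF^{-n}_a(H)$ for all admissible $a > \vvv H\vvv^+$. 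In particular the inclusion $\HF^{-n}_a(H) \to \HF^{-n}_{a'}(H)$ sends $[M]_a \mapsto [M]_{a'}$ and $[\widehat M]_a \mapsto [\widehat M]_{a'}$.

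For (i), suppose $c \notin \SC(H)$ and pick $\delta>0$ such that $[c-\delta, c+\delta]$ is disjoint from $\SC(H)$ and $c-\delta > \vvv H\vvv^+$. The filtered long exact sequence collapses to an isomorphism $\HF^{-n}_{c-\delta}(H) \cong \HF^{-n}_{c+\delta}(H)$. By the definition of infimum there is an $a_0 \in (c, c+\delta) \setminus \SC(H)$ with $[\widehat M]_{a_0} = [M]_{a_0}$, and naturality propagates this equality first to $c+\delta$ and then, via the isomorphism, back to $c-\delta$. This contradicts $c = \inf$, so $c \in \SC(H)$.

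For (ii), shrink $\delta > 0$ so that $\SC(H) \cap [c-\delta, c+\delta] = \{c\}$ and $c-\delta > \vvv H\vvv^+$. By the infimum property applied at $c-\delta < c$ we have $[\widehat M]_{c-\delta} \neq [M]_{c-\delta}$, while the same argument as in (i) produces an $a_0 \in (c, c+\delta) \setminus \SC(H)$ with $[\widehat M]_{a_0} = [M]_{a_0}$, whence $[\widehat M]_{c+\delta} = [M]_{c+\delta}$ by naturality. Thus the nonzero class $[\widehat M]_{c-\delta} - [M]_{c-\delta} \in \HF^{-n}_{c-\delta}(H)$ has zero image in $\HF^{-n}_{c+\delta}(H)$. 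Exactness of
\begin{equation*}
\HF^{-n-1}_{(c-\delta,c+\delta)}(H) \to \HF^{-n}_{c-\delta}(H) \to \HF^{-n}_{c+\delta}(H)
\end{equation*}
then forces $\HF^{-n-1}_{(c-\delta, c+\delta)}(H) \neq 0$, so the underlying cochain group $\CF^{-n-1}_{(c-\delta, c+\delta)}(H)$ is nonzero. Any generator is an orbit $\widehat x \in \PP(H)$ with $\cz(\widehat x) = -n-1$ and $\AC_H(\widehat x) \in (c-\delta, c+\delta)$; since $c$ is the only element of $\SC(H)$ in this interval, $\AC_H(\widehat x) = c$, as required.

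The step I expect to require the most care is the naturality verification: one must choose $f$ and $G$ so that the bounds $2\|f\| < \Delta^+_a - a$ and $\vvv G \vvv^+ < \Delta^+_H$ hold uniformly over an open interval of values of $a$ straddling $c$. Because $\Delta^+_a - a$ is bounded below by a positive constant on any compact subset of the complement of $\SC(H)$, this is tractable once $\delta$ is fixed small enough. With the resulting naturality in hand, the remainder of the argument is a formal consequence of the filtered Floer--Hofer machinery developed in Sections~3 and~4.
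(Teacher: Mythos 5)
Your proof is correct, but it routes through the filtered long exact sequence rather than the paper's more hands-on chain-level argument. The paper observes directly that, since $[\widehat M]_a - [M]_a = [Q+\beta-\alpha]$ is trivial in $\HF_a(H)$ for every admissible $a > \widehat\sigma(H)$, there must be a primitive $\eta \in \CF^{-n-1}_a(H)$ with $\d_J\eta = Q+\beta-\alpha$; minimality of $\widehat\sigma(H)$ forces $\AC_H(\eta) \geq \widehat\sigma(H)$ (otherwise $\eta$ would bound in $\CF_{a'}(H)$ for some admissible $a' < \widehat\sigma(H)$), and then finiteness of $\PP(H)$ pins down a generator of $\eta$ of action exactly $\widehat\sigma(H)$ and index $-n-1$. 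Your version replaces this explicit bounding chain with the exact triangle of the triple $(-\infty, c-\delta, c+\delta)$, turning the vanishing of $[\widehat M]_{c+\delta} - [M]_{c+\delta}$ together with the nonvanishing of $[\widehat M]_{c-\delta} - [M]_{c-\delta}$ into nontriviality of $\HF^{-n-1}_{(c-\delta,c+\delta)}(H)$. Both arguments rely on the same naturality input — that the inclusion $\HF_a(H)\to\HF_{a'}(H)$ carries $[M]_a$ to $[M]_{a'}$ and $[\widehat M]_a$ to $[\widehat M]_{a'}$ — which you are right to flag; the paper invokes it implicitly when asserting triviality of $[Q+\beta-\alpha]$ for all $a > \widehat\sigma(H)$. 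Your step (i) (showing $c \in \SC(H)$) is in fact subsumed by step (ii): if $\SC(H)\cap[c-\delta,c+\delta]$ were empty the same exactness argument would give the contradiction directly, so (i) could be dropped. The trade-off is that the paper's proof is shorter and avoids the relative groups entirely, while yours is closer to the standard template for proving that spectral invariants are critical values and generalizes more readily; either is acceptable here.
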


\begin{proof}
As described above, for every $a > \widehat{\sigma}(H)$ the class $[Q+\beta-\alpha]$ is trivial in
$\HF_a(H)$. Hence, there is a class $\eta$ in $\CF_a(H)$ such that  $\AC_H(\eta) \geq \widehat{\sigma}(H)$ and $\d_J(\eta) = Q+\beta -\alpha$.
In particular, for all $a > \widehat{\sigma}(H)$ there is an orbit  in $\PP(H)$ whose  Conley-Zehnder index is $-n-1$
and whose action is in the interval $[ \widehat{\sigma}(H), a)$. Since there are only finitely many elements of $\PP(H)$, the result follows.
\end{proof}

We now establish a  continuity property for the action selector $\widehat{\sigma}$. 
\begin{Proposition}
\label{contin}
If $H$ and $G$ are Floer Hamiltonians in $\HH(Q)$, then 
\begin{equation}
\label{cont}
| \widehat{\sigma}(H)- \widehat{\sigma}(G) | < \| H-G \| +|H_{av} - G_{av}|.
\end{equation}
\end{Proposition}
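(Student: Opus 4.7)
The plan is to mimic the standard continuity proof for Schwarz--type action selectors, exploiting the naturality of both fundamental classes $[M]_a$ and $[\widehat{M}]_a$ under continuation homomorphisms induced by linear homotopies.

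Fix $\epsilon > 0$ and take $a > \widehat{\sigma}(G) + \epsilon$ outside $\SC(G)$, so that by the definition of $\widehat{\sigma}(G)$ we have $[M]_a^G = [\widehat{M}]_a^G$ in $\HF_a(G)$. The linear homotopy $F_s = (1-b(s))G + b(s) H$ of Example \ref{linear} is $\vvv H - G\vvv^+$-bounded and induces the homomorphism
$$
\overline{\Psi}^G_H \colon \HF^*_a(G) \to \HF^*_{a + \vvv H - G\vvv^+}(H),
$$
under which $\overline{\Psi}^G_H([M]_a^G) = \overline{\Psi}^G_H([\widehat{M}]_a^G)$ in $\HF_{a + \vvv H - G\vvv^+}(H)$.

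The key step is to establish the naturality identities
$$
\overline{\Psi}^G_H([M]_a^G) = [M]^H_{a'} \quad\text{and}\quad \overline{\Psi}^G_H([\widehat{M}]_a^G) = [\widehat{M}]^H_{a'},
$$
with $a' \leq a + \vvv H - G\vvv^+ + (H_{av} - G_{av})$. For $[M]_a$, this reduces via Lemma \ref{splice} to comparing the spliced homotopy from a small Morse function $f$ to $G$ to $H$ with the direct linear homotopy from $f$ to $H$; both represent the same map in cohomology by the splicing lemma. The contribution $(H_{av} - G_{av})$ enters through the normalization in Proposition \ref{middle}: $[M]_a^H$ is constructed using an auxiliary Hamiltonian of average $H_{av}$ while $[M]_a^G$ uses one of average $G_{av}$, and this discrepancy appears as an action shift when passing between the two constructions under continuation. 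For the pinned class $[\widehat{M}]_a$, an additional splicing with a linear homotopy between $f_G = f + G(t,Q)$ and $f_H = f + H(t,Q)$ handles the dependence of the defining Morse dominator on the maximum value of the Hamiltonian along $[0,1] \times \{Q\}$.

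Combining the naturality with the initial equality yields $[M]^H_{a'} = [\widehat{M}]^H_{a'}$ in $\HF_{a'}(H)$, hence $\widehat{\sigma}(H) \leq \widehat{\sigma}(G) + \epsilon + \vvv H - G\vvv^+ + (H_{av} - G_{av})$. Letting $\epsilon \to 0$ and interchanging the roles of $H$ and $G$ produces the symmetric estimate, which combines into
$$
|\widehat{\sigma}(H) - \widehat{\sigma}(G)| < \|H - G\| + |H_{av} - G_{av}|,
$$
the strict inequality coming from the strict condition $a > \vvv H\vvv^+$ in the definition of $\widehat{\sigma}$ and the free positive slack $\epsilon$. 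The main obstacle will be making the naturality identities rigorous, particularly for the pinned class $[\widehat{M}]_a$: because this class is tied to $H$ through $f_H$, splicing $\overline{\Psi}^G_H$ with a Morse-continuation requires careful bookkeeping of the filtration shifts coming from the differences $\int_0^1 (H(t,Q) - G(t,Q))\,dt$ and $H_{av} - G_{av}$, so as not to inflate $a'$ beyond the claimed bound.
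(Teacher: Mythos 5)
Your proposal follows essentially the same route as the paper: both proofs transfer the equality $[M]_a = [\widehat{M}]_a$ from one Hamiltonian to the other via $C$-bounded continuation maps induced by linear homotopies, using Remark~\ref{simple} (valid here since $a > \widehat{\sigma} > \vvv\cdot\vvv^+$) to reduce the filtered fundamental classes to $\overline{\Psi}^f_\bullet([Q])$ and $\overline{\Psi}^{f_\bullet}_\bullet([Q])$, and Lemma~\ref{splice} to compose and compare. The one organizational difference worth noting: the paper first reduces to $H_{av} = G_{av}$ (exploiting the obvious invariance $\widehat{\sigma}(H - c) = \widehat{\sigma}(H) - c$ for constants $c$) and deduces the general case by a triangle inequality; this normalization makes the filtration bookkeeping cleaner and makes your $(H_{av} - G_{av})$ correction to $a'$ unnecessary — the linear homotopy gives $a' = a + \vvv H - G\vvv^+$ outright, and after normalization this becomes exactly $a + \|H - G\|$ via the pinned-Hamiltonian identity $\vvv f_H - f_G\vvv^+ = \vvv H\vvv^+ - \vvv G\vvv^+$ (both being $\int_0^1 H(t,Q)\,dt - \int_0^1 G(t,Q)\,dt$ since $Q$ is the common maximum point). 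Your approach also works and in fact yields a slightly sharper bound involving $\|H-G\|^{\pm}$, but you should be aware that the naturality statement for $[\widehat{M}]_a$ — which you correctly flag as the main obstacle — is precisely what the paper's equation \eqref{splitsy} establishes, so that part must be filled in before your argument is complete. Finally, neither your argument nor the paper's actually delivers the strict inequality as stated (the limit in $\epsilon$ closes up to a nonstrict bound); treat that as a harmless imprecision in the statement.
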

\begin{proof}
We first consider the case of two Hamiltonians $H$ and $G$ with $H_{av} = G_{av}$. Let $f$  be a dominating function for both  $H$ and $G$ and fix 
an $a>\widehat{\sigma}(H)$. The definition of $\widehat{\sigma}(H)$ implies that 
\begin{equation}
\label{ha}
[\widehat{M}]_a  = [M]_a \in \HF_a(H).
\end{equation}
Since $\widehat{\sigma}(H)>  \vvv H \vvv ^+$, it follows from Remark \ref{simple}  that for a dominating function $f$ with $\|f\|$ sufficiently small, equation 
\eqref{ha} is equivalent to
\begin{equation}
\label{ass}
\overline{\Psi}^{f_H}_H([Q]) = \overline{\Psi}^f_H([Q])  \in \HF_a(H).
\end{equation}

The properties of  $C$-bounded homotopies  discussed in Section \ref{filter} imply
that 
\begin{equation}
\label{splitsy}
\overline{\Psi}^{f_G}_G =  \overline{\Psi}^H_G \circ \overline{\Psi}^{f_H}_H   \circ \overline{\Psi}^{f_G}_{f_H} \colon  \HF_{a}(f_G) \to \HF_{a +\|G -H\|}(G).
\end{equation}
In particular, $\overline{\Psi}^{f_G}_G$  is defined by a monotone homotopy and hence is $C$-bounded for any $C>0$. 
By Lemma \ref{splice} and Example \ref{linear},  the map $\overline{\Psi}^H_G \circ \overline{\Psi}^{f_H}_H   \circ \overline{\Psi}^{f_G}_{f_H}$ is equal to
the Floer continuation map defined by a  homotopy which is $(\vvv G-H\vvv^+ + \vvv f_H -f_G\vvv^+)$-bounded. Since 
\begin{eqnarray*}
\vvv G-H\vvv^+ + \vvv f_H -f_G\vvv^+ & = &\vvv G-H\vvv^+ + \vvv H\vvv^+  - \vvv G\vvv^+ \\
{} & \leq & \vvv G-H\vvv^+ + \vvv H - G\vvv^+  \\
{} & = & \|G-H\|,
\end{eqnarray*}
it follows that $\overline{\Psi}^H_G \circ \overline{\Psi}^{f_H}_H   \circ \overline{\Psi}^{f_G}_{f_H}$ is also equal to
the map from $\HF_{a}(f_G)$ to $\HF_{a +\|G -H\|}(G)$ defined by a  $(\|G-H\|)$-bounded homotopy. This yields equation \eqref{splitsy}.

Together, equations \eqref{ass} and \eqref{splitsy} imply that
in $\HF_{a +\|G -H\|}(G)$ we have
\begin{eqnarray*}
\overline{\Psi}^{f_G}_G([Q]) &=& \overline{\Psi}^H_G \circ \overline{\Psi}^{f_H}_H   \circ \overline{\Psi}^{f_G}_{f_H} ([Q])\\
{} & = & \overline{\Psi}^H_G \circ\overline{ \Psi}^{f_H}_H ([Q])\\
{} & = & \overline{\Psi}^f_G  ([Q])
\end{eqnarray*}
Since $a > \widehat{\sigma}(H)$ and $H_{av} =G_{av}$  we have $a + \|G-H\| > \vvv H \vvv^+ + \vvv G-H \vvv^+ \geq \vvv G \vvv^+$. Hence, by Remark 
\ref{simple} again,  the equality $ \overline{\Psi}^{f_G}_G([Q])=\overline{\Psi}^f_G  ([Q])$ in $\HF_{a +\|G -H\|}(G)$
is equivalent to
 \begin{equation*}
[\widehat{M}]_{a+ \|G-H\|}  = [M]_{a+ \|G-H\|} \in \HF_{a+ \|G-H\|}(G).
\end{equation*}
In other words, for every $H$ and $G$ with $H_{av} =G_{av}$ and every $a > \widehat{\sigma}(H)$, we have  shown that $a +\|G-H\| >\widehat{\sigma}(G)$. In the case, $H_{av} =G_{av}$ we then have 
$
\widehat{\sigma}(G) \leq \widehat{\sigma}(H) +\|G-H\|
$
and hence
\begin{equation}
\label{first}
| \widehat{\sigma}(H)- \widehat{\sigma}(G) | \leq \|H-G\|.
\end{equation}

For any Hamiltonians $H$ and $G$,  inequality \eqref{first} implies
\begin{eqnarray*}
| \widehat{\sigma}(H)- \widehat{\sigma}(G) | & = & | \widehat{\sigma}(H-H_{av})- \widehat{\sigma}(G-G_{av}) +H_{av} -G_{av} | \\
{} & \leq  & \| (H-H_{av})- (G-G_{av}) \| +|H_{av} -G_{av} |\\
{} & = &  \| H- G\| +|H_{av} -G_{av} |, 
\end{eqnarray*}
as desired.
\end{proof}
Proposition \ref{contin} implies that $\widehat{\sigma}$ is continuous with respect to the $C^0$-norm on $C^{\infty}(S^1 \times M)$.  This allows us to define $\widehat{\sigma}(H)$ for every $H \in \HH(Q)$.

\section{A SPECIAL HAMILTONIAN FLOW NEAR $L$}

In this section we describe the construction of  a Hamiltonian $\hl$ whose flow is supported near $L$ and does not minimize the 
negative Hofer length in its homotopy class. The nonconstant contractible periodic orbits of this Hamiltonian are related directly to closed  geodesics of a  metric $g$ on $L$.
One aspect of this relation is in terms of familiar indices; the Conley-Zehnder index of a $1$-periodic orbit  of $\hl$ is related to the Morse index of the 
corresponding closed geodesic and the Maslov index of the class in $\ker i^*$ that it represents.  This index relation is recalled at the end of the section 
 as Proposition \ref{index}.

\subsection{Geodesic flows}
\label{geodesic}
Let $g$ be a Riemannian metric on a closed manifold $L$.  As noted in the introduction, the energy functional of $g$
on the space of smooth loops $C^{\infty}(S^1, L)$,  is defined by
$$
\EE_g(q(t)) = \int_0^1 \12  |\dot{q}(t)|^2   \, dt 
$$
and the critical points of $\EE_g$ are the closed geodesics
of $g$ with period equal to one. The closed geodesics of $g$ 
with any  positive period $T>0$ correspond to the $1$-periodic orbits of the metric $\frac{1}{T} g$, and
are thus the critical points of the functional $\EE_{\frac{1}{T}g}$. 
 
Denote the Hessian of $\EE_g$ at a critical point $q(t)$, by $Hess(\EE_g)_q$.
The dimension of the space on which $Hess(\EE_g)_q$ is negative definite
is  finite. This dimension is the Morse index of $q$ and will be denoted by $\mor(q)$. 
The Hessian of $\EE_g$ at a critical point also  has a finite dimensional kernel which is 
always nontrivial (unless $L$ is a point). 

A submanifold $D \subset C^{\infty}(S^1,L)$ which consists of critical points of
$\EE_g$ is said to be  Morse-Bott nondegenerate if the dimension of the kernel of 
$Hess(\EE_g)_q$ is equal to the dimension of $D$ for every $q \in D$. 
The energy functional $\EE_g$ is said to be Morse-Bott if all the $1$-periodic
geodesics are contained in Morse-Bott nondegenerate critical submanifolds of $\EE_g$.
Note that if $\EE_g$ is Morse-Bott, then so is $\EE_{\frac{1}{T}g}$ for any $T>0$.

\subsection{Geodesic flows supported near a Lagrangian submanifold}
\label{sec:knu}
Let  $L$ be a displaceable Lagrangian submanifold of a symplectic manifold $(M, \om)$.  Any Riemannian metric $g$ on $L$ determines a cometric on $T^*L$ which we may use to define the neighborhood of the zero section  
$$
U_r=  \{ (q,p) \in T^*L  \mid |p| < r\}.
$$
Weinstein's neighborhood theorem implies that, for sufficiently small $r>0$, there is a neighborhood 
of $L$ in $(M, \om)$ which is symplectomorphic to $U_r$ where $T^*M$ is equipped with the symplectic form 
$d\theta$ where $\theta$ is the Liouville one-form. We fix a value of $r$ 
for which this holds, and will henceforth  identify $U_r$ with a 
neighborhood of $L$ in $(M, \om)$.  Decreasing this value of $r$, if necessary, we may also assume that
$U_r$ has finite displacement energy. For a subinterval $I \subset [0,r)$, we will use the notation
$$
U_I = \{ (q,p) \in U_r \mid |p| \in I \}.
$$

For a numbers  $\eps \in  (0,r/8)$ and $C > 0$, let $\nu = \nu_{\scriptscriptstyle{\eps,C}} \colon [0,+\infty) \to \R$ be a smooth  function with the following properties
\begin{itemize}
  \item $\nu  = 0$ on $[0, \eps]$;
  \item $\nu', \nu'' > 0$ on $(\eps, 2\eps)$; 
  \item $\nu  = -\eps + Cs $ on $[2\eps, r-2\eps]$;
   \item $\nu' > 0$ and $\nu''<0 $ on $(r-2\eps, r-\eps)$; 
  \item $\nu = A < rC$ on $[r-\eps ,+\infty)$.
\end{itemize}
For future reference, we note that these properties imply that the function $s \mapsto -\nu(s)+\nu'(s)s $
is nondecreasing, and takes values in the interval $[0,\eps]$ for $s \in [0, r-2\eps]$.

Define the Hamiltonian $K_{\nu}$ on $M$ by
$$K_{\nu}(q,p)=
\begin{cases}
 \nu(|p|) & \text{if $(q,p)$ is in $U_r$}, \\
 A    & \text{otherwise}.
\end{cases}
$$
The Hamiltonian flow of $K_{\nu}$ is trivial on $U_{\eps}$ and the complement of $U_{r-\eps}$.
Elsewhere,  
\begin{equation*}
X_{K_{\nu}} (q,p) = \left( \frac{\nu'(|p|)}{|p|}  \right)  X_{K_g}(q,p),
\end{equation*}
where $K_g = \12 |p|^2$ is the kinetic energy Hamiltonian  which generates the 
cogeodesic flow of $g$ on $(T^*L, d \theta)$. In particular,  if $x(t)$ is a 
nonconstant $1$-periodic orbit of $K_{\nu}$, then $x(t)$ can be expressed in standard $(q,p)$
coordinates on $T^*L$ as $x(t)=(q(t),p(t))$, where $q(t)$ is a closed geodesic of $g$ on $L$
with period  $|p(0)| / \nu'(|p(0)|)$ and length
\begin{equation*}
\int_0^1 | \dot{q}(t)| \, dt = \int_0^1 \nu'(|p(t)|) \, dt = \nu'(|p(0)|).
\end{equation*}
Hence, for every nonconstant orbit $x(t)=(q(t),p(t))$ in $\PP(K_{\nu})$ we have the uniform bound
\begin{equation}
\label{uniform bound}
|\dot{q}(t)| \leq C.
\end{equation}
Moreover, if $C$ is not the length of a closed geodesic of $g$ on $L$, then the nonconstant orbits of $K_{\nu}$ occur on the level sets  
contained in $U_{(\eps , 2\eps)}$ or  $U_{(r-2\eps , r-\eps)}$,
where $\nu$ is convex or concave, respectively.

Note that
\begin{eqnarray*}
\|K_{\nu}\|^- & = & -\left(\min K_{\nu} - \int_M K_{\nu} \, \om^n \Big/ \Vol(M)\right)\\
{} & = & \int_M K_{\nu} \, \om^n \Big/ \Vol(M)  \\
{} & > & A \left(\frac{\Vol(M) - \Vol(U_r)}{\Vol(M)}\right).
\end{eqnarray*}
Let 
\begin{equation*}
E = \frac{4e(U_r)}{r} \left(\frac{\Vol(M)}{\Vol(M) - \Vol(U_r)}\right).
\end{equation*}
By construction, $A> \frac{Cr}{2}$, so for $C>E$ we have
\begin{equation}
\label{too big}
\|K_{\nu}\|^- > 2e(U_r).
\end{equation}
In this case, Proposition \ref{shorten} implies that the Hamiltonian flow of $K_{\nu}$
does not minimize the negative Hofer length. We will henceforth assume that  $C>E$. 

\subsection{A perturbation of $K_{\nu}$}

The following  result  describes a perturbation of $K_{\nu}$ which is a Floer Hamiltonian whose constant periodic orbits 
 capture the Morse homology of $M$ and whose nonconstant periodic orbits retain many of the properties 
of those of $K_{\nu}$.  It follows from the proof of Proposition 3.5 in \cite{ks}.
\begin{Proposition}
\label{function}
Let $L$ be a Lagrangian submanifold of $(M,\om)$ and let $g$ be a metric on $L$ whose energy  functional is Morse-Bott. 
Let  $\nu= \nu_{\scriptscriptstyle{\eps,C}}$ be a function as above,  with $C>E$. There is an $\eps_0>0$ such that for 
every $0<\eps<\eps_0$ there is a Floer Hamiltonian  
$\hl = \hl^{\scriptscriptstyle{\eps,C}}$ which has the following properties:
  \newcounter{Lcount}
  \begin{list}{{\bf (H\arabic{Lcount})}}
    {\usecounter{Lcount}
    \setlength{\rightmargin}{\leftmargin}}
 
\item Each $\hl = \hl^{\scriptscriptstyle{\eps,C}}$ is within a distance $\eps$  of $K_{\nu}=K_{\nu_{\scriptscriptstyle{\eps,C}}}$ in the $C^{\infty}$-topology on $C^{\infty}(S^1 \times M)$. In particular, we may assume that the $C^0$-distance between them is small enough so that 
\begin{equation}
\label{ }
\|\hl^{\scriptscriptstyle{\eps,C}} - K_{\nu_{\scriptscriptstyle{\eps,C}}}\| + |(\hl^{\scriptscriptstyle{\eps,C}})_{av} - (K_{\nu_{\scriptscriptstyle{\eps,C}}})_{av}| < \epsilon.
\end{equation}
\item The constant $1$-periodic orbits $\hl$ correspond to the critical points of 
  a Morse function $F$ on $M$. Near these points the Hamiltonian flows of $\hl$
  and $c_0 F$ are identical for some arbitrarily small constant $c_0>0$;
\item  The nonconstant $1$-periodic orbits of $\hl$ are contained in $U_{(\eps , r-\eps)}$ and if $C$ is not the length 
  of a closed geodesic of $g$ on $L$, then they are contained in $U_{(\eps , 2\eps)} \cup U_{(r-2\eps , r-\eps)}$. In either case, 
  for every nonconstant   $x(t)=(q(t), p(t))$ in $\PP(\hl)$ we have  the uniform  bound 
 \begin{equation}
\label{uniform bound2}
 |\dot{q}(t)| < 2C;
\end{equation}
  \item There is a point $Q \in L \subset M$ which is the unique local minimum
  of $\hl(t, \cdot)$ for all $t \in [0,1]$. 
\item The flow $\phi^t_{\hl}$ does not minimize the negative Hofer length in its homotopy class.
\end{list}
\end{Proposition}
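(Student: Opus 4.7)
The aim is to perturb $K_{\nu}$ in a controlled way so that the result becomes a Floer Hamiltonian satisfying (H1)--(H5), without disturbing the feature of $K_{\nu}$ that made $\phi^t_{K_{\nu}}$ fail to minimize the negative Hofer length. First I would locate the $1$-periodic orbits of $K_{\nu}$ itself: the constant orbits fill the regions $U_{\eps}$ and $M\setminus U_{r-\eps}$ where $\nu$ is constant, while the nonconstant orbits are confined to $U_{(\eps,r-\eps)}$ and, by the computation in Section \ref{sec:knu}, on the level $\{|p|=s\}$ they correspond to closed geodesics of $g$ of length $\nu'(s)$ after time reparameterization. If $C$ is avoided by the length spectrum of $g$, no orbits occur in the linear region $\nu'\equiv C$, so they are confined to the convex cap $U_{(\eps,2\eps)}$ and the concave cap $U_{(r-2\eps,r-\eps)}$. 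Because $\EE_g$ is assumed Morse-Bott, these nonconstant orbits sit inside $M$ as Morse-Bott critical submanifolds of the action functional.

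Second, I would perform the perturbation in two independent local steps. In the first step, pick a $C^{\infty}$-small Morse function $F$ on $M$ whose critical set is contained in $U_{\eps/2}\cup(M\setminus U_{r-\eps/2})$ and which has a unique local minimum at a chosen point $Q\in L$; such an $F$ is built by taking a Morse function on $L$ with unique minimum at $Q$, extending it to $U_{\eps/2}$ via the bundle projection, and attaching a separate Morse function outside $U_{r-\eps/2}$. On these regions set $\hl:=K_{\nu}+c_0 F$ and interpolate smoothly to $K_{\nu}$ across a thin collar where $\nu$ is still constant. For sufficiently small $c_0>0$ the linearized time-$1$ flow at each critical point of $F$ does not admit $1$ as an eigenvalue, so (H2) and (H4) follow. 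In the second step, in a tubular neighborhood of each Morse-Bott family of nonconstant orbits of $K_{\nu}$, I would apply the standard Morse-Bott perturbation (as in the proof of Proposition 3.5 of \cite{ks}) by adding a small $t$-dependent function $c_1 h$ supported there, with $h$ built from a Morse function on the critical manifold of closed geodesics coupled with a $t$-dependent term selecting a finite set of orbits per $S^1$-family of time reparameterizations. For small $c_1>0$ this produces finitely many nondegenerate $1$-periodic orbits, all $C^1$-close to the original Morse-Bott family; in particular their projections to $L$ are $C^1$-close to the underlying closed geodesics, so (\ref{uniform bound2}) follows from (\ref{uniform bound}) after allowing the factor of $2$, and (H3) is delivered.

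Third, I would verify the remaining global properties. Choosing $c_0$ and $c_1$ arbitrarily small in $C^{\infty}$ yields $\|\hl-K_{\nu}\|_{C^{\infty}}<\eps$, which implies (H1) including the quantitative norm bound. For (H5), I would apply Proposition \ref{shorten} to the time-independent Hamiltonian $K_{\nu}$, which satisfies $\|K_{\nu}\|^->2e(U_r)$ by (\ref{too big}); this yields $\|K_{\nu}\|^->\rho^-([\phi^t_{K_{\nu}}])+\12\|K_{\nu}\|^+$. Since $\|\cdot\|^{\pm}$ and $\rho^{\pm}$ are $C^0$-continuous in the Hamiltonian, a sufficiently small perturbation preserves the strict inequality $\|\hl\|^->\rho^-([\phi^t_{\hl}])$, which is precisely the failure of $\phi^t_{\hl}$ to minimize the negative Hofer length.

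The main obstacle is the second perturbation step: it must simultaneously break the degeneracy of every nonconstant orbit family, remain supported inside $U_{(\eps,2\eps)}\cup U_{(r-2\eps,r-\eps)}$ so that no spurious orbits appear in the formerly constant regions, and leave the resulting orbits close enough to the original geodesics that their Conley-Zehnder indices can be read off from the Morse indices of the underlying closed geodesics (as will be needed in Proposition \ref{index}). These are standard features of the Morse-Bott perturbation machinery, but checking them concurrently in the convex and concave caps of $\nu$ is the technical heart of the argument and is exactly what is carried out in the proof of Proposition 3.5 of \cite{ks}, which I would invoke.
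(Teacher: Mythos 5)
Your reconstruction follows the same overall route as the paper, which itself gives no details and simply writes ``It follows from the proof of Proposition 3.5 in \cite{ks}.''\ You likewise defer to that proposition for the Morse--Bott perturbation of the nonconstant orbits, and your surrounding scaffolding (two independent local perturbations; Proposition~\ref{shorten} applied to $K_{\nu}$ combined with Lipschitz continuity of $\|\cdot\|^{\pm}$ and $\rho^{\pm}$ for (H5)) is sound. Two points in the first perturbation step need tightening, however. First, ``extending [a Morse function on $L$] to $U_{\eps/2}$ via the bundle projection'' does not give a Morse function: the pullback $f_L\circ\pi$ has as its critical set the entire fibres over $\mathrm{Crit}(f_L)$, so its critical points are far from nondegenerate, and $K_{\nu}+c_0(f_L\circ\pi)$ would fail (H2) on $U_{\eps/2}$. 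One must instead take something of the form $F(q,p)=f_L(q)+\tfrac12|p|^2$ there, whose critical points are exactly $\mathrm{Crit}(f_L)\times\{0\}$ and which is Morse. Second, for (H4) it is not enough that $F$ has a local minimum at $Q$; the Morse function you ``attach outside $U_{r-\eps/2}$'' must have no local minima of its own in that region, since $\hl = A + c_0F$ there and any local minimum of $F$ there would be a local minimum of $\hl(t,\cdot)$ for every $c_0>0$. This is arrangeable (one can build $F$ on $M\setminus U_{r-\eps/2}$ with all critical points of positive index, e.g.\ from a handle decomposition of that region relative to its boundary), but it must be stipulated explicitly. Both are expository slips rather than structural gaps; the argument as a whole is correct and consistent with the paper's reliance on \cite{ks}.
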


The construction of $\hl$ from \cite{ks}  also implies that each nonconstant  contractible $1$-periodic orbit $x(t)$ of $\hl$ projects to 
a nondegenerate  critical point $q(t)$ of a perturbed energy
functional of the form 
$$
\EE_{\frac{1}{T}g,V}(q) = \int_0^1 \left( \frac{1}{2T^2}  |\dot{q}(t)|^2 - V(t, q(t)) \right) \, dt.
$$
Here,  $T>0$ and $V \colon S^1 \times L \to \R$ is smooth and arbitrarily small with respect to the $C^\infty$-metric. Thus, the 
bound \eqref{uniform bound2} follows immediately from \eqref{uniform bound}. The perturbed geodesic $q$ can be associated to a 
unique critical submanifold $D$ of $\EE_{\frac{1}{T}g}$ and  the Morse index of $q$ belongs to $[\mor(D), \mor(D) + \dim D].$ 

The following index relation follows from the work of Duistermaat \cite{du}, Weber \cite{we}, and Viterbo \cite{vi}. For a
proof the reader is referred to Proposition 4.4 of \cite{ks}.

\begin{Proposition}
\label{index}
Let $L$ be a Lagrangian submanifold of $(M,\om)$ and let $g$ be a metric on $L$ which has nonpositive sectional curvature and whose energy  functional $\EE_g$ is Morse-Bott. Denote by $d_g$ 
the maximum dimension of a critical submanifold of $\EE_g$ which consists of nonconstant 
closed geodesics.
Let $x(t) = (q(t), p(t))$ be a contractible $1$-periodic orbit of a Hamiltonian $\hl$ as in Proposition \ref{function}.
If $x$ is contained in $U_{(\eps,2\eps)}$, then 
\begin{equation*}
\cz(x) -1-d_g  \leq \masl([q]) \leq \cz(x) (+1),
\end{equation*}
where the $(+1)$ term contributes only if $q^*TM$ is not orientable.
\end{Proposition}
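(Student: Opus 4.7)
The plan is to reduce Proposition \ref{index} to an index identity relating $\cz(x)$, $\mor(q)$ and $\masl([q])$, following the derivation of Proposition 4.4 in \cite{ks}. First I would identify $q(t)$ as a perturbed closed geodesic: from the construction of $\hl$ recalled just before the statement, $q$ is a nondegenerate critical point of a $C^{\infty}$-small perturbation $\EE_{\frac{1}{T}g,V}$ of the rescaled energy functional, and it is associated to a unique Morse-Bott critical submanifold $D$ of $\EE_{\frac{1}{T}g}$ with $\mor(q) \in [\mor(D),\mor(D)+\dim D]$.

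Next I would use the nonpositive-curvature hypothesis to pin these bounds down. The second variation of $\EE_{\frac{1}{T}g}$ at $q_0 \in D$ along a Jacobi field $J$ is
$$\int_0^1 \left(\tfrac{1}{T^2}\,|\nabla_{\dot q_0} J|^2 - \langle R(J,\dot q_0)\dot q_0, J\rangle\right) dt,$$
which is nonnegative when the sectional curvature of $g$ is $\leq 0$; hence $\mor(D)=0$. Moreover, periodic Jacobi fields along $q_0$ are then parallel, so $\dim D \leq d_g$ by the definition of $d_g$. Combining these gives $\mor(q)\in[0,d_g]$.

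Finally I would invoke the Morse index theorem of Duistermaat \cite{du}, in the symplectic form developed by Weber \cite{we} and Viterbo \cite{vi}. Because $(M,\omega)$ is symplectically aspherical and $x$ is contractible, any capping disc for $x$ induces a homotopy-canonical symplectic trivialization of $x^*TM$ relative to which $\cz(x)$ is defined. Restricted to $\partial D^2$, it produces a loop of Lagrangian subspaces in $q^*TM$ whose rotation computes $\masl([q])$, modulo a $\pm 1$ ambiguity when $q^*TM$ is nonorientable. In the convex region $U_{(\epsilon,2\epsilon)}$, the linearized flow of $\hl$ along $x$ is a small perturbation of the linearized cogeodesic flow along $q$, whose index is governed by the Jacobi equation; comparing the capping-disc trivialization with the horizontal-vertical splitting of $T(T^*L)$ coming from the Levi-Civita connection of $g$ then yields an identity of the form
$$\cz(x) - \mor(q) - \masl([q]) \in \{0,1\},$$
with a further $\pm 1$ correction in the nonorientable case. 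This is precisely what is carried out in Proposition 4.4 of \cite{ks}. Substituting $\mor(q)\in[0,d_g]$ into this identity produces the claimed bounds $\cz(x)-1-d_g \leq \masl([q]) \leq \cz(x)(+1)$. The main obstacle is establishing this last index identity, which requires carefully tracking the Maslov-index comparison between the two symplectic trivializations of $x^*TM$ and the associated orientation corrections; the derivation in \cite{ks} transfers to our setting verbatim.
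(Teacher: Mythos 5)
Your proposal is correct and takes the same approach as the paper: the paper itself does not prove Proposition \ref{index} but refers directly to Proposition 4.4 of \cite{ks}, which is exactly the index identity you invoke. Your outline of what that cited argument contains — nonpositive curvature forcing $\mor(D)=0$ and $\dim D\leq d_g$, hence $\mor(q)\in[0,d_g]$, followed by the Duistermaat--Weber--Viterbo comparison of Conley--Zehnder, Morse, and Maslov indices with the orientation correction — matches the source being cited and yields the claimed bounds.
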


\section{PROOF OF THEOREM \ref{thm1}}

First we recall that for a Hamiltonian $H$
the flow of the Hamiltonian  
\begin{equation*}
\widetilde{H}(t,p) = -H(t, \phi^t_H(p))
\end{equation*}
satisfies  
$$\phi^t_{\widetilde{H}} =(\phi^t_H)^{-1}.$$
Clearly, $$ \|\widetilde{H}\|^{\pm} = \|H\|^{\mp}.$$
Moreover,  the map 
\begin{equation}
\label{inverse}
x(t) \mapsto  \phi^t_{\widetilde{H}}(x(0))
\end{equation}
is a bijection from $\PP(H)$ onto $\PP(\widetilde{H})$ such that
\begin{equation*}
\cz\left(\phi^t_{\widetilde{H}}\left(x(0)\right)\right) = -\cz(x)
\end{equation*}
and 
\begin{equation*}
\AC_{\widetilde{H}}\left(\phi^t_{\widetilde{H}}(x(0))\right) = -\AC_H(x).
\end{equation*}

Let  $\nu_{\scriptscriptstyle{\eps,C}}$ be a family of smooth functions defined as in Section \ref{sec:knu}, which depend smoothly on the parameters $\eps$ and $C$.
Fix a corresponding smooth family of Hamiltonians $\hl^{\eps,C}$, as in Proposition \ref{function}, 
for $(\eps,C) \in (0, \eps_0) \times (E, \infty)$. Set $$\hlo^{\scriptscriptstyle{\eps,C}}(t,p) =-\hl^{\scriptscriptstyle{\eps,C}}(t, \phi^t_{\hl^{\scriptscriptstyle{\eps,C}}}(p)).$$ By properties ${\bf (H2)}$ and  ${\bf (H4)}$, each $\hlo^{\scriptscriptstyle{\eps,C}}$ is pinned at $Q$. By property ${\bf(H5)}$, $\phi^t_{\hl^{\eps,C}}$ does not minimize the negative Hofer length. Hence, the function
$$\widehat{\sigma}(\eps,C) \eqdef \widehat{\sigma}(\hlo^{\scriptscriptstyle{\eps,C}})$$ is well-defined on $(0, \eps_0)\times (E, \infty)$. By Proposition \ref{contin}, this function is continuous.
The following result relates  $\widehat{\sigma}(\eps,C)$ to $1$-periodic orbits of the Hamiltonians $\hlo^{\scriptscriptstyle{\eps,C}}$.

\begin{Lemma}
For each $\eps$ in  $(0,  \eps_0)$ there is a 
nonconstant periodic orbit $\widehat{x}_{\eps}$ in $\PP(\hlo^{\scriptscriptstyle{\eps,C}})$ such that 
\begin{equation}
\label{b}
\AC_{\hlo^{\scriptscriptstyle{\eps,C}}}(\widehat{x}_{\eps}) = \widehat{\sigma}(\eps,C),
\end{equation}
and 
\begin{equation}
\label{c}
\cz(\widehat{x}_{\eps})=-n- 1.
\end{equation}
\end{Lemma}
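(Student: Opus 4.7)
The plan is to apply Lemma~\ref{realize2} directly to $\hlo^{\eps,C}$ and then to rule out that the resulting orbit is constant. The paragraph preceding the lemma already records that $\hlo^{\eps,C}$ is pinned at $Q$ via (H2) and (H4) and that $\widehat\sigma(\eps,C)=\widehat\sigma(\hlo^{\eps,C})$ is well-defined; implicit in the latter is $\hlo^{\eps,C}\in\HH(Q)$, which follows from (H5) together with the identities $\|\widetilde H\|^{\pm}=\|H\|^{\mp}$ and the fact that inverting a Hamiltonian path interchanges $\rho^+$ and $\rho^-$ on its homotopy class. The remaining hypothesis of Lemma~\ref{realize2} is that $\hlo^{\eps,C}$ is a Floer Hamiltonian; this follows from the bijection~\eqref{inverse}, since the linearized time-one map at the image orbit is the inverse of $d\phi^1_{\hl^{\eps,C}}$ at the source orbit, and an invertible matrix has $1$ as an eigenvalue if and only if its inverse does.

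Lemma~\ref{realize2} then yields an orbit $\widehat{x}_\eps\in\PP(\hlo^{\eps,C})$ with $\cz(\widehat{x}_\eps)=-n-1$ and $\AC_{\hlo^{\eps,C}}(\widehat{x}_\eps)=\widehat\sigma(\eps,C)$, giving both \eqref{b} and \eqref{c}. It remains to rule out that $\widehat{x}_\eps$ is constant.

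A constant orbit of either $\hl^{\eps,C}$ or $\hlo^{\eps,C}$ is precisely a common fixed point of every time-$t$ map of its (mutually inverse) flow, so~\eqref{inverse} restricts to a bijection between the constant orbits of the two Hamiltonians. By (H2), each such fixed point $p$ is a critical point of a Morse function $F$ on $M$ near which the flow of $\hl^{\eps,C}$ coincides with that of $c_0 F$ for an arbitrarily small $c_0>0$; the index normalization recalled before Theorem~\ref{morse} then gives $\cz_{\hl^{\eps,C}}(p)=n-\mathrm{ind}_F(p)\in[-n,n]$. Since \eqref{inverse} flips the sign of the Conley--Zehnder index, every constant orbit of $\hlo^{\eps,C}$ also has Conley--Zehnder index in $[-n,n]$. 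Because $-n-1$ lies outside this interval, $\widehat{x}_\eps$ cannot be constant. The only substantive step is this final index comparison; everything preceding it is routine bookkeeping on top of Lemma~\ref{realize2}.
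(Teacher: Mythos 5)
Your proof is correct and takes the same route as the paper: apply Lemma \ref{realize2} to $\hlo^{\scriptscriptstyle{\eps,C}}$ and use {\bf (H2)} to deduce that every constant $1$-periodic orbit has Conley--Zehnder index at least $-n$, hence an orbit of index $-n-1$ must be nonconstant. You simply spell out several checks the paper treats as already established in the surrounding text (that $\hlo^{\scriptscriptstyle{\eps,C}}$ is a Floer Hamiltonian and lies in $\HH(Q)$, and that the bijection \eqref{inverse} carries constant orbits to constant orbits while negating the index), all of which are accurate.
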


\begin{proof}
Lemma  \ref{realize2} implies  the existence of an orbit $\widehat{x}_{\eps}$ 
with properties \eqref{b}, and \eqref{c}. By condition {\bf (H2)}, every constant $1$-periodic orbit 
of $\hlo^{\scriptscriptstyle{\eps,C}}$ has Conley-Zehnder index no less than $-n$, and so \eqref{c} implies that $\widehat{x}_{\eps}$ must be nonconstant. 
\end{proof}

\begin{Lemma}
\label{extend}
The function $\widehat{\sigma}(\eps,C)$  has a continuous extensions to $[0, \eps_0) \times (E, \infty)$.
\end{Lemma}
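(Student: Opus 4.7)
The plan is to set $\widehat{\sigma}(0,C) := \lim_{\eps\to 0^+} \widehat{\sigma}(\eps,C)$ and then to establish both existence of this limit for each $C\in(E,\infty)$ and joint continuity on $[0,\eps_0)\times(E,\infty)$. Proposition \ref{contin} is the natural tool but it does not apply directly to the family $\hlo^{\eps,C}$: the outer transition interval $(r-2\eps,r-\eps)$ of $\nu_{\eps,C}$ shrinks as $\eps\to 0$, so the Hofer distance $\|\hlo^{\eps_1,C}-\hlo^{\eps_2,C}\|$ retains a contribution of order $rC-A$ coming from that region and the family cannot be shown to be Cauchy as $\eps\to 0$.

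I would sidestep this obstruction by introducing an auxiliary family $G^{\eps,C}$ of Floer Hamiltonians built from a profile $\widecheck{\nu}_{\eps,C}$ whose inner transition interval is still $(\eps,2\eps)$ but whose outer transition interval is the \emph{fixed} interval $(r-2\eps_0,r-\eps_0)$, independent of $\eps$; the remainder of the construction of Proposition \ref{function} is carried out verbatim, and one conjugates to $\widetilde{G}^{\eps,C}(t,p)=-G^{\eps,C}(t,\phi^t_{G^{\eps,C}}(p))\in\HH(Q)$. Since $\widecheck{\nu}_{\eps,C}$ converges in $C^\infty$ as $\eps\to 0$ to a smooth profile $\widecheck{\nu}_{0,C}$ on $[0,r]$, the family $\widetilde{G}^{\eps,C}$ extends smoothly to $\eps=0$; in particular $\widetilde{G}^{\eps,C}\to\widetilde{G}^{0,C}$ in $C^0$ with averages converging, jointly continuously in $C$.

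The key step is then to show that $\widehat{\sigma}(\widetilde{G}^{\eps,C})=\widehat{\sigma}(\eps,C)$ for every $\eps<\eps_0$. This is a localization statement: by Lemma \ref{realize2}, $\widehat{\sigma}(\eps,C)$ is realized by a nonconstant orbit $\widehat{x}_\eps\in\PP(\hlo^{\eps,C})$ with $\cz(\widehat{x}_\eps)=-n-1$, which via the bijection \eqref{inverse} corresponds to an orbit $x_\eps\in\PP(\hl^{\eps,C})$ with $\cz(x_\eps)=n+1$ and $\AC_{\hl^{\eps,C}}(x_\eps)=-\widehat{\sigma}(\eps,C)$. A direct computation of the action functional $\nu(|p|)-|p|\nu'(|p|)$ on the two transition caps, combined with the uniform bound \eqref{uniform bound2} and a Conley--Zehnder analysis of the concave outer cap, forces $x_\eps$ to sit in the inner region $U_{(\eps,2\eps)}$, where $\hl^{\eps,C}$ and $G^{\eps,C}$ coincide. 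The filtered Floer complexes below the relevant action level therefore agree, and so do the cycles representing $[M]_a$ and $[\widehat{M}]_a$ used to define the selector.

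Combining these two points, $\widehat{\sigma}(\eps,C)=\widehat{\sigma}(\widetilde{G}^{\eps,C})$ extends continuously to $\eps=0$ by Proposition \ref{contin} applied to the auxiliary family, and continuity in $C\in(E,\infty)$ is inherited from the smooth $(\eps,C)$-dependence of $\widetilde{G}^{\eps,C}$. The hard part is the localization claim of the previous paragraph: ruling out outer orbits of $\hl^{\eps,C}$ at index $n+1$ requires matching the action computations on the convex and concave caps with the Conley--Zehnder calculation for the latter, which is where I would expect the bookkeeping to be most delicate.
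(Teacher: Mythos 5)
Your proposed route is based on a misreading of the construction in Section~\ref{sec:knu}, and as a result it detours around a problem that does not exist while introducing genuinely hard new ones. The constant $A$ in the definition of $\nu=\nu_{\scriptscriptstyle{\eps,C}}$ is not fixed: it depends on $\eps$ (and $C$), and since $\nu(r-2\eps)=Cr-\eps-2C\eps$ while $\nu'>0,\ \nu''<0$ on $(r-2\eps,r-\eps)$, one has $A=A_{\eps,C}\in\bigl(Cr-\eps(1+2C),\,Cr\bigr)$, so $A_{\eps,C}\to Cr$ as $\eps\to 0$. Consequently $\max_s|\nu_{\scriptscriptstyle{\eps_k,C}}(s)-\nu_{\scriptscriptstyle{\eps_l,C}}(s)|=O\bigl((1+C)(\eps_k+\eps_l)\bigr)$ and the Hofer norm $\|K_{\nu_{\scriptscriptstyle{\eps_k,C}}}-K_{\nu_{\scriptscriptstyle{\eps_l,C}}}\|$ and the averages both go to zero. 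The paper therefore applies Proposition~\ref{contin} together with {\bf (H1)} directly and gets Cauchy-ness of $\widehat{\sigma}(\eps,C)$ in $\eps$ with no need for an auxiliary family. The one subtlety this does not remove --- passing from $\|\hlo^{\eps_k,C}-\hlo^{\eps_l,C}\|$ to $\|K_{\nu_{\eps_k,C}}-K_{\nu_{\eps_l,C}}\|$ through $\widetilde{K}_{\nu}=-K_{\nu}$ --- is handled by the $C^\infty$-closeness in {\bf (H1)}, which controls how much the flow of $\hl^{\eps,C}$ can move the level sets of $K_\nu$.

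Beyond that, your ``localization'' step carries a circularity risk you should be aware of. You propose to show $\widehat{\sigma}(\hlo^{\eps,C})=\widehat{\sigma}(\widetilde{G}^{\eps,C})$ by arguing that the index $-n-1$ orbit $\widehat{x}_\eps$ sits in the inner cap $U_{(\eps,2\eps)}$, using a Conley--Zehnder and action analysis of the outer concave cap. But the statement that $\widehat{x}_\eps$ is an inner-cap orbit for suitable $(\eps,C)$ is precisely Proposition~\ref{locate}, and the paper's proof of that proposition relies on the explicit limiting formulas $\widehat{\sigma}(0,C)=-\int_{D^2}\widehat{\bf q}_{0,C}^*\om$ or $\widehat{\sigma}(0,C)=-Cr+\ell(\widehat{q}_{0,C})r-\int_{D^2}\widehat{\bf q}_{0,C}^*\om$ derived inside the proof of Lemma~\ref{extend} (via Arzel\`a--Ascoli and the identity $-\nu(s)+\nu'(s)s\in[0,\eps]$). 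So you would need an independent argument for localization before you can even start. Finally, even granting localization of the critical orbit, the assertion that ``the filtered Floer complexes below the relevant action level agree'' for $\hlo^{\eps,C}$ and $\widetilde{G}^{\eps,C}$ is unjustified: the selector $\widehat{\sigma}$ is defined through the classes $[M]_a$ and $[\widehat M]_a$, which are built from continuation maps, and neither the generators nor the Floer/continuation trajectories entering their construction are a priori confined to $U_{(\eps,2\eps)}$. In short, the obstacle you invented is spurious, and the repair is both harder than the original problem and incomplete; the paper's direct Proposition~\ref{contin} estimate, followed by the Arzel\`a--Ascoli computation to identify the limit values in a countable set (which is what feeds Proposition~\ref{locate}), is the correct and shorter path.
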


\begin{proof}

We begin by showing that the limit as $\eps \to 0$ of   $\widehat{\sigma}(\eps, C)$ exists for each $C$.
For any sequence $\eps_j$ in $(0, \eps_0)$, Proposition \ref{contin} and {\bf(H1)} imply that
\begin{eqnarray*}
|\widehat{\sigma}(\eps_k,C) -\widehat{ \sigma}(\eps_l,C)| & \leq & \| \hlo^{\scriptscriptstyle{\eps_k,C}}- \hlo^{\scriptscriptstyle{\eps_l,C}}\|+
| (\hlo^{\scriptscriptstyle{\eps_k,C}})_{av}- (\hlo^{\scriptscriptstyle{\eps_l,C}})_{av}|\\
{} & \leq & \eps_k +\eps_l + \| K_{\nu_{\eps_k,C}} - K_{\nu_{\eps_l,C}}\| + | (K_{\nu_{\eps_k,C}})_{av} - (K_{\nu_{\eps_l,C}})_{av}|.
\end{eqnarray*}
Hence, if $\eps_j \to 0$, then the sequence $\widehat{\sigma}(\eps_j,C)$ is Cauchy and converges. It suffices to show that the limit of this sequence is independent of the choice of  $\eps_j \to 0$. 

For the orbits $\widehat{x}_{\eps} = (\widehat{q}_{\eps} , \widehat{p}_{\eps} )$, it follows from  \eqref{uniform bound2} that $\|\dot{\widehat{q}}_{\eps}(t)\|<2C$ for all $t \in [0,1]$ and all $\eps \in (0, \eps_0)$. By the Arzela-Ascoli theorem, for every sequence $\eps_j \to 0$ there is a subsequence $\eps_{j_k}$ such that the $\widehat{q}_{\eps_{j_k}}$ converge to a closed geodesic $\widehat{q}_{0,C}$ of $g$. The corresponding lifts $\widehat{x}_{\eps_{j_k}} = (\widehat{q}_{\eps_{j_k}}, \widehat{p}_{\eps_{j_k}})$ converge to some $(\widehat{q}_{0,C},\widehat{p}_{0,C})$
and hence
\begin{equation}
\label{ident}
\lim _{j \to \infty}\widehat{\sigma}(\eps_j,C) = \lim_{k \to \infty }\AC_{\hlo^{\scriptscriptstyle{\eps_{j_k},C}}}(\widehat{x}_{\eps_{j_k}}). 
\end{equation}
For small $\eps >0$, the fact that $\hl^{\scriptscriptstyle{\eps,C}}$ is within $\eps$ of $K_{\nu_{\scriptscriptstyle{\eps,C}}}$ in the $C^{\infty}$-topology, implies that 
\begin{eqnarray*}
\AC_{\hlo^{\scriptscriptstyle{\eps,C}}}(\widehat{x}_{\eps}) & = & \AC_{-K_{\nu_{\scriptscriptstyle{\eps,C}}}}\left( \widehat{x}_{\eps}\right) + o(\eps)\\
{} & = &-\int_0^1 K_{\nu_{\scriptscriptstyle{\eps,C}}}(\widehat{x}_{\eps}(t)) \, dt - \int_{D^2} {\bf \widehat{x}_{\eps}}^* \om + o(\eps)\\
{} & = & -\left(\nu_{\scriptscriptstyle{\eps,C}}\right)(|\widehat{p}_{\eps}|) -
              \left( \int_0^1 \widehat{p}_{\eps} \dot{\widehat{q}_{\eps}} \, dt + \int_{D^2}{\bf \widehat{q}_{\eps}}^*\om \right)+o(\eps)\\
{} & = & -\left(\nu_{\scriptscriptstyle{\eps,C}}\right)(|\widehat{p}_{\eps}|)+ \left(\nu_{\scriptscriptstyle{\eps,C}}\right)'(|\widehat{p}_{\eps}|) |\widehat{p}_{\eps}|  
-\int_{D^2} {\bf \widehat{q}_{\eps}}^*\om +o(\eps).
\end{eqnarray*}
As well, the defining properties of the $\nu_{\scriptscriptstyle{\eps,C}}$ imply that each function  $$s \mapsto -\left(\nu_{\scriptscriptstyle{\eps,C}}\right)(s)+ \left(\nu_{\scriptscriptstyle{\eps,C}}\right)'(s)s $$
takes values in $[0, \eps]$ for $s \in [0, r-2\eps]$. Thus,
$\lim_{k \to \infty }\AC_{\hlo^{\scriptscriptstyle{\eps_{j_k},C}}}(\widehat{x}_{\eps_{j_k}})$ is equal to either  $$- \int_{D^2} {\bf \widehat{q}_{0,C}}^* \om$$ or  $$-Cr +\ell(\widehat{q}_{0,C})r - \int_{D^2} {\bf \widehat{q}_{0,C}}^* \om.$$  
Now, the union of the quantities $\ell(\widehat{q}_{0,C})$  and $\int_{D^2} {\bf \widehat{q}_{0,C}}^* \om$,  over all closed geodesics $\widehat{q}_{0,C}$, is a countable set. By \eqref{ident},  this implies that $\lim_{j \to \infty} \widehat{\sigma}(\eps_j,C)$ is independent of the sequence $\eps_j \to 0$. Hence, $\lim_{\eps \to 0} \widehat{\sigma}(\eps,C)$ exists for each $C \in (E, \infty)$. We denote this limit by $\widehat{\sigma}(0,C)$.

It remains to show that $\widehat{\sigma}(0,C)$ is continuous in $C$. This is an immediate consequence of the following inequalities
\begin{eqnarray*}
|\widehat{\sigma}(0,C) -\widehat{\sigma}(0,C')| & = & \lim_{\eps \to 0}|\widehat{\sigma}(\eps,C) -\widehat{\sigma}(\eps,C')| \\
{} & \leq & \lim_{\eps \to  0} \left(\left\| \hlo^{\scriptscriptstyle{\eps,C}}- \hlo^{\scriptscriptstyle{\eps,C'}}\right\| +
\left| (\hlo^{\scriptscriptstyle{\eps,C}})_{av}- (\hlo^{\scriptscriptstyle{\eps,C'}})_{av} \right| \right) \\
{} & \leq &  2r|C-C'|.
\end{eqnarray*}

\end{proof}

If  $C$ is not the length of a closed geodesic of $g$ on $L$, then property {\bf (H3)} implies that the nonconstant orbits $\widehat{x}_{\eps}$  lie in either $U_{(\eps , 2\eps)}$ or  $U_{(r-2\eps, r-\eps)}$.  

\begin{Proposition}
\label{locate}
If $C$ is  sufficiently large and is not the length of a closed geodesic of $g$ on $L$, then for some $\eps \in (0,\eps_0)$ the orbit $ \widehat{x}_{\eps}$ lies in $U_{(\eps , 2\eps)}$. \end{Proposition}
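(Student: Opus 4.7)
The plan is to argue by contradiction: assume $\widehat{x}_\eps \in U_{(r-2\eps,\, r-\eps)}$ for every $\eps \in (0, \eps_0)$, and derive a contradiction for $C$ sufficiently large. The first step would be to extract a coherent limit closed geodesic. By property (H3), the projection $\widehat{q}_\eps$ is $C^\infty$-close to an honest closed geodesic of $g$ of length $\ell_\eps = \nu'_{\eps, C}(|\widehat{p}_\eps|) \in (0, C)$. Since $\EE_g$ is Morse--Bott, closed geodesic lengths form a discrete subset of $(0, \infty)$, and since $C$ itself is not a geodesic length, only finitely many such lengths lie in $(0, C)$. Pigeonholing and applying Arzela--Ascoli via the uniform bound \eqref{uniform bound2}, I pass to a subsequence $\eps_j \to 0$ along which $\ell_{\eps_j} = \ell^*$ is constant and $\widehat{q}_{\eps_j} \to \gamma^*$ in $C^\infty$ for a specific closed geodesic $\gamma^*$, with area $\omega^* = \int_{D^2} (\gamma^*)^* \om$ on a spanning disc.

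Next I would apply the limit-of-action computation carried out in the proof of Lemma \ref{extend}. Because $\widehat{x}_{\eps_j}$ sits in the concave band, the second branch of the dichotomy there yields
\begin{equation*}
\widehat{\sigma}(0, C) \;=\; \lim_{j \to \infty} \AC_{\hlo^{\eps_j, C}}(\widehat{x}_{\eps_j}) \;=\; -Cr + \ell^* r - \omega^*.
\end{equation*}
A matching lower bound $\widehat{\sigma}(0, C) \geq 0$ follows from the defining inequality $\widehat{\sigma}(\eps, C) > \vvv \hlo^{\eps, C} \vvv^+$: properties (H1) and (H4), combined with $K_{\nu_{\eps, C}} \geq 0$ vanishing on $L \ni Q$, imply $\vvv \hlo^{\eps, C} \vvv^+ = O(\eps)$, so the strict inequality persists in the limit. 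Together these force the rigid constraint $\omega^* \leq -(C - \ell^*) r$, and $C - \ell^* > 0$ is bounded below by the gap from $C$ to the nearest closed-geodesic length of $g$ in $(0, C)$.

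To close the argument when $C$ is sufficiently large, I would combine two ingredients. First, the extended selector $\widehat{\sigma}(0, \cdot)$ is continuous in $C$, and its concave-branch values are rigidly of the form $-C'r + \ell'r - \omega'$, a linear function of $C'$ with slope $-r$ between consecutive closed-geodesic lengths; the nonnegativity $\widehat{\sigma}(0, C') \geq 0$ therefore cuts any such branch off at $C' = \ell' - \omega'/r$. Second -- and this is the key input needed -- an analog of Proposition \ref{index} for the concave band, where $\nu'' < 0$ reverses the contribution of the radial direction to the Conley--Zehnder index, should show that the constraint $\cz(\widehat{x}_\eps) = -n-1$ (from Lemma \ref{realize2}) pins down $\masl([\gamma^*])$ to a range determined only by $n$ and $d_g$. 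Coupled with the discreteness of closed geodesic classes with bounded Maslov data, this would bound $|\omega^*|$ by a constant $M$ depending only on $L$, $g$, and the embedding into $M$, and then any $C > \ell^* + M/r$ renders $\omega^* \leq -(C - \ell^*) r$ infeasible.

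\textbf{The principal obstacle} is exactly the concave-region analog of Proposition \ref{index} and the extraction from it of a uniform bound on $|\omega^*|$. I expect the index analog to reduce to a direct Conley--Zehnder computation for the reparametrized cogeodesic flow in the band $\nu'' < 0$, flipping the sign of the radial eigenvalue contribution relative to the convex case; the conversion to a bound on $\omega^*$ then requires careful tracking of the Morse--Bott directions and of the Maslov shift at the boundary of the concave region, and this is where most of the work will live.
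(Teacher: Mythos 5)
Your reduction to the contradiction hypothesis, the extraction of a limit geodesic via Arzela--Ascoli and the uniform length bound, the concave-branch formula $\widehat{\sigma}(0,C) = -Cr + \ell^* r - \omega^*$, and the lower bound $\widehat{\sigma}(0,C) \geq 0$ all match the paper's proof. But from there you turn onto an unnecessary and likely unworkable detour. The step you flag as the principal obstacle --- a concave-band analog of Proposition \ref{index} together with a $C$-independent bound $|\omega^*| \leq M$ --- is not only unresolved; it is doubtful even in outline. The closed geodesics of $g$ of length below $C$ form a set that grows with $C$, and a fixed Maslov constraint need not single out a finite set of homotopy classes in $\ker i^*$ (on a flat torus, for instance, infinitely many closed geodesics have Maslov index zero). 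So the bound on $|\omega^*|$ you need would, in general, depend on $C$, and the final step would not close.

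The paper's actual argument is far softer, and you are closer to it than you realize. You already observe that $\widehat{\sigma}(0,\cdot)$ is continuous in $C$ and that on the concave branch it equals $-Cr + (\ell r - \omega)$ with $\ell r - \omega$ ranging over a rigid (countable) set as $C$ varies. The missing observation is that $C \mapsto \widehat{\sigma}(0,C) + Cr$ is therefore a continuous function on an interval whose values on a dense subset (the complement of the length spectrum) lie in a countable set; by the intermediate value theorem any non-constant continuous function on an interval has uncountable image, so $\widehat{\sigma}(0,C) + Cr$ must be a single constant $K_r$. Hence $\widehat{\sigma}(0,C) = -Cr + K_r \to -\infty$, contradicting $\widehat{\sigma}(0,C) \geq 0$ for $C$ large. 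No concave-band index analysis, no bound on $\omega^*$, and no Maslov-data argument is needed --- the countability-plus-continuity observation replaces all of it.
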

\begin{proof}
Assume that $C$ is not the length of a closed geodesic of $g$ on $L$ and that $\widehat{x}_{\eps} = (\widehat{q}_{\eps} , \widehat{p}_{\eps} )$ belongs to $U_{(r-2\eps, r-\eps)}$ for all $\eps \in (0, \eps_0)$.  It follows from the proof of Lemma \ref{extend} that 
\begin{equation*}
\widehat{\sigma}(0,C) = -Cr +\ell(\widehat{q}_{0,C})r - \int_{D^2} {\bf \widehat{q}_{0,C}}^* \om, 
\end{equation*}
for some closed geodesic $\widehat{q}_{0,C}$ of $g$. Since both $\ell(\widehat{q}_{0,C})$ and $\int_{D^2} {\bf \widehat{q}_{0,C}}^* \om$
take values in a countable set, and $\widehat{\sigma}(0,C)$ is a continuous function of $C$, we have  
\begin{equation}
\label{Down}
\lim_{\eps \to 0} \widehat{\sigma}(\eps,C) = -Cr +K_r
\end{equation}
for some constant $K_r$.
For suffuciently large $C>0$, this contradicts the fact that 
\begin{eqnarray*}
\widehat{\sigma}(0,C) & \geq  &  \lim_{\eps \to 0} \vvv\hlo^{\scriptscriptstyle{\eps,C}}\vvv^+ \\
{} & = & \vvv-K_{\nu_{\scriptscriptstyle{0,C}}} \vvv^+\\
{} & = & 0.
\end{eqnarray*}

\end{proof}

Proposition \ref{locate}  implies that for a sufficiently large choice of $C>0$ which lies outside the length spectrum of the metric $g$, there is an $\eps \in (0,\eps_0)$ and
a  periodic orbits $\widehat{x}_{\eps} \in \PP(\hlo^{\scriptscriptstyle{\eps,C}})$ such that $\widehat{x}_{\eps}$ has Conley-Zehnder index $-n-1$ and is contained in 
$U_{(\eps,2\eps)}$. The orbit $\widehat{x}_{\eps}$ corresponds to a periodic orbit $x_{\eps} = (q_{\eps} , p_{\eps})$ in  $\PP(\hl^{\scriptscriptstyle{\eps,C}})$ which has Conley-Zehnder index $n+1$ and 
still lies in $U_{(\eps,2\eps)}$. It then follows from Proposition \ref{index} that
\begin{equation*}
n +1- d_g  \leq \masl([q_{\eps}]) \leq n+1 \,(+1).
\end{equation*}

Let $A$  be the class $[q_{\eps}]$. The proof of Theorem \ref{thm1} will be complete if we can show that $\omega(A)>0$. By definition, we have 
\begin{equation}
\label{start}
\AC_{\hlo^{\scriptscriptstyle{\eps,C}}}(\widehat{x}_{\eps}) = \widehat{\sigma}(\eps,C) > \vvv \hlo^{\scriptscriptstyle{\eps,C}} \vvv^+.
\end{equation}
Arguing as above, we may assume that for an arbitrarily small $\eps >0$ the orbit $\widehat{x}_{\eps}$ lies in $U_{(\eps,2\eps)}$.
In this  case \eqref{start} implies that 
\begin{equation*}
\label{ }
-\int_{D^2} {\bf \widehat{q}_{\eps}}^*\om +o(\eps) \geq \vvv \widetilde{K}_{\nu_{\scriptscriptstyle{\eps,C}}}\vvv^+ +o(\eps),
\end{equation*}
and we have 
\begin{equation}
\label{end}
\omega(A) = \int_{D^2} {\bf q_{\eps}}^*\om \geq 0.
\end{equation}
To obtain the strict inequality we invoke another observation from \cite{vi}, (see page 301). Assume that for every class $C \in \ker i^*$ with $\masl(C) \in [n+1-d_g, n+1\,(+1)]$ we have 
$\omega(C) \leq 0$.  Then there is an isotopic Lagrangian submanifold $L'$ arbitrarily close to $L$, and hence also displaceable, such  that every class $C \in \ker (i')^* =\ker i^*$ with $\masl(C) \in [n+1-d_g, n+1\,(+1)]$ satisfies  $\omega(C)<0$. This contradicts the existence, established above, of the class $A$ satisfying $\omega(A)\geq 0$ and  $\masl(A) \in [n+1-d_g, n+1\,(+1)].$

\end{document}